\documentclass[10pt, reqno]{amsart}

\usepackage{amsmath}
\usepackage{amsfonts}
\usepackage{amssymb}
\usepackage{xcolor}
\usepackage{soul}
\usepackage{amssymb,latexsym,amsmath,amsfonts}
\usepackage{hyperref}
\usepackage{amsthm}
\usepackage{dsfont}
\usepackage[mathscr]{eucal}
\usepackage[english]{babel}
\usepackage{lipsum}
\usepackage{setspace}
\usepackage{mathtools}
\usepackage{tikz}
\usepackage{pgfplots}
\usetikzlibrary{calc,angles,positioning,intersections,quotes,decorations}

\pgfplotsset{compat=1.18}
\theoremstyle{definition}
\newtheorem*{defn}{Definition}
\newtheorem{theorem}{Theorem}[section]
\newtheorem{lemma}{ \bf Lemma}[section]
\newtheorem{proposition}{\bf Proposition}[section]
\newtheorem{corollary}{\bf Corollary}[section]
\newtheorem{conjecture}{\bf Conjecture}[section]
\theoremstyle{remark}
\newtheorem{remark}{Remark}[section]
\newcommand{\be}{\begin{equation}}
\newcommand{\ee}{\end{equation}}
\newcommand{\Bea}{\begin{eqnarray*}}
\newcommand{\Eea}{\end{eqnarray*}}
\newcommand{\bea}{\begin{eqnarray}}
\newcommand{\eea}{\end{eqnarray}}

\numberwithin{equation}{section}

\title{Some constructions of uniformly positive scalar curvature metrics on open manifolds}
\author{Anushree Das}
\date{June 2026}

\address{Beijing International Center for Mathematical Research, Peking University, Beijing, 100871, P. R. China}
\email{anushree@bicmr.pku.edu.cn}

\subjclass{Primary 53C23,53C21}

\begin{document}

\begin{abstract}
We obtain several constructions of uniformly positive scalar curvature complete Riemannian metrics on open manifolds. For dimension $n\geq3$, we show that if such a manifold admits a proper Morse function $f$ bounded below such that $f$ has no critical points of index $\geq n-2$ then it admits a uniformly positive scalar curvature metric. On the other hand if such a manifold admits a positive scalar curvature metric along with a compact exhaustion $\{U_i\}$ such that the boundary of each $U_i$ is minimal, then it also admits a uniformly positive scalar curvature metric. For dimension $4 \leq n\leq 7$, we show that if the manifold has product ends and a positive scalar curvature metric with $C$-quadratic decay at infinity for $C>4\pi^2$ with respect to some basepoint, then the existence of a mean convex hypersurface far enough from the basepoint implies the existence of a uniformly positive scalar curvature metric on the manifold. We study some applications of these results, including showing that if an open manifold of dimension $n\geq 3$ that admits no uniformly positive scalar curvature metric has a positive scalar curvature metric with mean convex exhaustion then it admits a mean convex foliation of compact sets sufficiently close to the ends. On the other hand if such a manifold has a mean concave exhaustion, then its ends admit a mean concave foliation.
\end{abstract}

\maketitle

\section{Introduction}

A Riemannian metric $g$ on an open manifold $M$ is said to have uniformly positive scalar curvature (UPSC) if there exists a constant $\epsilon>0$ such that the scalar curvature $R_M$ satisfies $R_M\geq \epsilon$ everywhere on $M$. While the question of the existence of positive scalar curvature metrics on closed manifolds is quite well understood due to the celebrated Kazdan-Warner Theorem, the existence of positive or uniformly positive scalar curvature metrics on open manifolds has proven to be more complicated. This has been an area of significant interest, with a lot of progress having been made in recent years. 

In the article \cite{GromovBlaine} published in $1983$, Gromov and Lawson obtained several topological conditions under which an open manifold admits no complete metric of positive scalar curvature. In particular, they introduced the idea of enlargeability and $\Lambda^2$-enlargeability, and showed that a $\Lambda^2$-enlargeable manifold cannot admit a complete metric of positive scalar curvature, and neither can the product of an enlargeable manifold with $\mathbb{R}$. On the other hand, the product of an enlargeable manifold with $\mathbb{R}^2$ cannot have a complete metric of uniformly positive scalar curvature, while the product with $\mathbb{R}^3$ can. In $2010$, Chang, Weinberger and Yu proved in \cite{Taming3mfds} that $\mathbb{R}^3$ is the only contractible $3$-manifold admitting a complete UPSC metric, whereas the Whitehead manifold, for instance, does not admit one. Recently, Wang showed that the Whitehead manifold, in fact, cannot admit any metric of non-negative scalar curvature in \cite{WangWhitehead}. Other notable results include Chodosh and Li in \cite{ChodoshLiChao} regarding the non-existence of positive scalar curvature metrics on aspherical manifolds in dimensions $4$ and $5$ (see also Gromov in \cite{Gromov5aspherical}); non-existence of positive scalar curvature metrics in open SYS manifolds and UPSC metrics in weakly SYS manifolds in dimensions between $3$ and $7$ by Shi, Wang, Wu, and Zhu in \cite{SYS}; Orikasa's recent results regarding decay rates of positive scalar curvature metrics and non-existence of UPSC metrics in manifolds satisfying some topological constraints in \cite{Orikasa}; the proof by Chodosh, Máximo, and Mukherjee in \cite{CMM} showing that the interior of a compact, contractible $4$-manifold with boundary 
admits a UPSC metric only it is homeomorphic to $\mathbb{R}^4$; and Sweeney Jr's related result in dimension $5$ in \cite{Sweeney}.

\begin{defn}
    Let $M$ be a open manifold of dimension $n$ equipped with a complete Riemannian metric $g$. For any point $p\in M$, we say that $(M,g)$ has at most $C$-quadratic decay of scalar curvature at infinity if there exists some $R>0$ such that for all points $x\in M$ at a distance of at least $R$ of from $p$, which we denote as $d(p,x)=r_x\geq R$, the scalar curvature at $x$ satisfies
    \begin{equation}
        R_M(x)>\frac{C}{r_x^2}.
    \end{equation}
\end{defn}

While the definition of $C$-quadratic decay of curvature at infinity is dependent on the choice of basepoint $p$, it is to be noted that a different choice of basepoint results in the same decay rate for a different constant $R$. Gromov had conjectured (see \cite{FourLectures}, also \cite{Chen}) that the existence of a positive scalar curvature metric with sufficiently slow rate of decay is equivalent to the existence of a uniformly positive scalar curvature metric.

\begin{conjecture}[Gromov, \cite{FourLectures}]\label{gromovconjecture}
There exists a universal critical constant $C_n > 0$ such that the following holds. Let $M$ be an orientable $n$-manifold that admits a complete Riemannian metric of positive scalar curvature.
\begin{enumerate}
    \item For every $C < C_n$, there exists a complete Riemannian metric on $M$ of positive scalar curvature with at most $C$-quadratic decay at infinity.
    \item If $M$ admits a complete Riemannian metric with positive scalar curvature with $C$-quadratic decay at infinity for $C > C_n$, then $M$ admits a complete Riemannian metric with uniformly positive scalar curvature.
\end{enumerate}
\end{conjecture}

The conjecture is supported by the fact that $\mathbb{T}^{n-2}\times \mathbb{R}^2$, the product of the $n-2$ dimensional torus with the Euclidean plane, does not support any metric of uniformly positive scalar curvature. On the other hand, the scalar curvature of a metric on this manifold is bounded above by $\frac{4\pi^2}{(r_x-R)^2}$, where $R$ and $r_x$ are as per the definition above (see \cite{Gromovmetricinequalities}). Point $2$ of Conjecture \ref{gromovconjecture} was first proven in dimension $3$ by Balacheff, de Mora Sardà, and Sabourau in \cite{Balacheff} , with decay constant $C> 64\pi^2$. Their method involved using the fill radius to show that the manifold decomposes into connected sums of closed spherical manifolds and $\mathbb{S}^1\times \mathbb{S}^2$ components, which always admits a uniformly positive scalar curvature metric.

Recently, Chen in \cite{Chen} showed the existence of the optimal constant $C_3$ in dimension $3$, thus resolving point $2$ of Conjecture \ref{gromovconjecture} in dimension $3$.

\begin{theorem}[Chen, \cite{Chen}]
    Let $M$ be a complete connected orientable Riemannian $3$-manifold. Suppose that $M$ has positive scalar curvature with at most $C$-quadratic decay at infinity for some $C > \frac{2}{3}$. Then $M$ decomposes as a possibly infinite connected sum of spherical manifolds and $\mathbb{S}^2\times\mathbb{S}^1$ summands.

The constant $\frac{2}{3}$ is optimal; the manifold $\mathbb{R}^2\times\mathbb{S}^1$ admits a complete metric of positive scalar curvature with at most $C$-quadratic decay for every $0 <C < \frac{2}{3}$.
\end{theorem}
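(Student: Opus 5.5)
We need to propose a proof of Chen's theorem: a complete orientable 3-manifold with PSC and at most C-quadratic decay, C>2/3, decomposes as a connected sum of spherical manifolds and S^2×S^1 summands; plus optimality via R^2×S^1.

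The plan is to show first that $(M,g)$ has \emph{bounded filling radius}, or more precisely that it admits an exhaustion by compact regions whose boundaries are unions of spheres and whose pieces have uniformly bounded size. The decomposition then follows from the Gromov--Lawson / Chang--Weinberger--Yu style argument used by Balacheff, de Mora Sard\`a and Sabourau \cite{Balacheff}.

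The key tool is the $\mu$-bubble (warped stable surface) technique. Fix the basepoint $p$ and consider the annular region $A=\{x: r_1\le r_x\le r_2\}$ with $r_1\ge R$. On it, choose a weight $h$ depending on the distance function $r_x$. Then minimize the brane functional $\mathcal{A}(\Omega)=\mathcal{H}^2(\partial^*\Omega)-\int (\chi_\Omega-\chi_{\Omega_0})h$. The weight $h$ should be chosen so that $h\to\pm\infty$ at the two boundary components, which guarantees a smooth minimizer $\Sigma$ separating them (surfaces are regular in dimension 3). The stability inequality for $\Sigma$, combined with the Schoen--Yau rearrangement and Gauss--Bonnet, gives
\[
\int_\Sigma \Big(\tfrac12 R_M+\tfrac12 h^2-|\nabla h|\Big)\varphi^2 \le \int_\Sigma |\nabla\varphi|^2+K_\Sigma\varphi^2 .
\]
The decay hypothesis $R_M>C/r^2$ enters here. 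I will take $h$ to be a function of $\log r$ (scale invariance) and solve the ODE $\tfrac12 C/r^2+\tfrac12 h^2-|h'|>0$. The main obstacle is to show that this is solvable on $[r_1,\lambda r_1]$ for some fixed ratio $\lambda$ exactly when $C>2/3$, while still leaving a positive margin. That margin forces each component of $\Sigma$ to be a sphere, and it also bounds the intrinsic diameter of the components by a multiple of $r$, using the Schoen--Yau/Gromov diameter estimate for surfaces with $\lambda_1(-\Delta+\tfrac12 R)>0$. I expect that the threshold constant requires a careful choice of $h$ together with a conformal/warped version of stability; this is where $2/3$ must come out.

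With spheres $\Sigma_k$ located at radii $\sim\lambda^k$, I would argue as follows. Each $\Sigma_k$ bounds a compact region $U_k$, and the regions $U_{k+1}\setminus U_k$ are compact 3-manifolds with $S^2$ boundary components. By Perelman's resolution of geometrization, each capped-off piece that carries a PSC-type structure is a connected sum of spherical space forms and $S^2\times S^1$. To see this, note that the pieces have no aspherical summands: an aspherical or $K(\pi,1)$ prime summand would lift to a cover containing arbitrarily large regions where the same $\mu$-bubble argument yields contradictions (the Gromov--Lawson/Schoen--Yau obstruction, as in \cite{Taming3mfds}). Gluing the pieces along the spheres then gives the possibly infinite connected sum.

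For optimality, I will write a warped metric $dr^2+\phi(r)^2d\theta^2+\psi(r)^2dt^2$ on $\mathbb{R}^2\times S^1$. I will choose $\phi\sim r^{a}$ and $\psi\sim r^{b}$ for large $r$, and compute the scalar curvature
\[
R=-2\Big(\tfrac{\phi''}{\phi}+\tfrac{\psi''}{\psi}+\tfrac{\phi'\psi'}{\phi\psi}\Big)=\tfrac{-2(a(a-1)+b(b-1)+ab)}{r^2}.
\]
Maximizing over $a,b$ gives the supremum $2/3$ at $a=b=1/3$. Finally, I will smooth near $r=0$ with positive curvature.
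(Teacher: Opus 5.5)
The paper does not prove this theorem. It quotes it from \cite{Chen} and sketches the route: $\mu$-bubbles in annuli, stability making the bubble positively curved, Gauss--Bonnet forcing spheres, and Wang's work \cite{JW} for the connected-sum decomposition. Your outline follows that route.

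Your optimality part is correct. The formula $r^2R=2(a+b-a^2-b^2-ab)$ is maximised at $a=b=\frac13$ with value $\frac23$. The distance to the core circle differs from $d(p,\cdot)$ by a bounded amount. The smoothing is routine: with $s=\log r$, $a=r\phi'/\phi$, $b=r\psi'/\psi$ one has $-\frac12 r^2R=\dot a+\dot b+a^2+b^2+ab-a-b$. So lower $a$ from $1$ to $\frac13$ with $\psi$ constant, then raise $b$ to $\frac13$ with $\dot b<\frac29$. The gaps are in the main direction.

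\textbf{First gap: your inequality cannot produce $\frac23$.} You discarded $\frac12|A|^2$ from the second variation. Write any radial weight as $h=\eta(\log r)/r$. Your requirement becomes $\frac C2+\frac12\eta^2-|\dot\eta-\eta|>0$, and at $\eta=1$ this forces $\dot\eta>\frac{1-C}2$. So for $C\le 1$ the function $\eta$ can never run from $+\infty$ to $-\infty$ (and symmetrically at $\eta=-1$). Your set-up therefore gives the threshold $C>1$.

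The missing idea is not a conformal or warped stability; it is simply keeping the second fundamental form. On a surface $|A|^2\ge\frac12H^2=\frac12h^2$, so the potential is $\frac12R_M+\frac34h^2-|\nabla h|$. The condition becomes $\frac C2+\frac34\eta^2-|\dot\eta-\eta|>0$. Now solve $\dot\eta=\eta-\theta(\frac C2+\frac34\eta^2)$ with $\theta<1$ and $\theta^2C>\frac23$. The right-hand side then has negative discriminant $1-\frac32\theta^2C$, so $\eta$ falls from $+\infty$ to $-\infty$ over a bounded $\log r$-interval. It leaves a margin of at least $(1-\theta)\frac{C}{2r^2}$, and the errors from smoothing $r$ are absorbed because $C>\frac23$ is strict. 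Since $\max_\eta(\eta-\frac34\eta^2)=\frac13$, this is exactly where $\frac23$ comes from (in general $\frac{n-1}{n}$).

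\textbf{Second gap: the topological step is not yet an argument.} Nothing you have equips the capped-off pieces with a positive scalar curvature metric. Capping would require control of the mean curvature $h$ of the bubble spheres, whose sign you do not control. So after geometrization you must genuinely exclude aspherical prime summands, and your covering sketch does not do this. Rerunning the $\mu$-bubble argument in a cover only yields more spheres with diameter bounds. Turning that into a contradiction with asphericity needs a separate filling- or width-type argument.

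The clean reduction is this: an aspherical summand $N$ of a capped piece can be isotoped off the caps, giving $M\cong N\# X$. One then needs a theorem excluding complete positive scalar curvature metrics on such manifolds. In the account the paper gives, this whole step is taken from Wang \cite{JW}. A smaller point: the pieces have diameter $O(r)$, not uniformly bounded size as your opening paragraph says.
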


For an open manifold $M$, we say that a nested sequence of submanifolds $\{U_i\}_{i=1}^\infty$ is a compact exhaustion of $M$ if $U_1 \subset U_2 \subset \dots \subset M$, $M = \bigcup^\infty_{i=1}U_i$, and each $U_i$ is a compact codimension-$0$ smooth submanifold with smooth boundary $\partial U_i$. 

Chen also showed that if an orientable $n$ dimensional manifold admits a complete positive scalar curvature metric with at most $C$-quadratic curvature decay at infinity where $C>\frac{n-1}{n}$ and $3\leq n \leq 7$, then the manifold admits a compact exhaustion such that the boundaries admit positive scalar curvature metric.

\begin{proposition}[Chen, \cite{Chen}]
Let $3 \leq n \leq 7$. Suppose $M$ is a complete orientable $n$-manifold. Suppose that $M$ has positive scalar curvature with at most $C$-quadratic decay at infinity for some $C > \frac{n-1}{n}$. Then $M$ admits a compact exhaustion $\{K_i\}_{i=1}^\infty$ such that each $\partial K_i$ admits a metric of positive scalar curvature.
\end{proposition}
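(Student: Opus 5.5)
The plan is to use $\mu$-bubbles (warped $\mu$-bubbles) in the spirit of Gromov and Chodosh--Li. Each boundary $\partial K_i$ will be a stable $\mu$-bubble whose induced geometry carries a positive first eigenvalue of a conformal-Laplacian-type operator. That positivity yields a positive scalar curvature metric on each $\partial K_i$ by the standard Schoen--Yau conformal argument. The dimension restriction $3\le n\le 7$ guarantees that minimizers of the relevant brane functional are smooth hypersurfaces.

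Fix a basepoint $p$ and let $R$ be as in the decay definition, so $R_M(x)>C/r_x^2$ for $r_x\ge R$. Let $\rho$ be a smoothing of $d(p,\cdot)$ with $|\nabla\rho|\le 1+\delta$. For a sequence of radii $a_i\to\infty$, work on the annular region $\Omega_i=\{a_i\le\rho\le b_i\}$ with $b_i=\lambda a_i$ for a large fixed ratio $\lambda$. On $\Omega_i$ define
\[
h(x)=-\frac{\alpha}{\rho}\cot\!\Big(\frac{\beta\log(\rho/a_i)}{1}\Big)\quad\text{or a similar log-scaled function},
\]
which blows up to $+\infty$ on $\{\rho=a_i\}$ and to $-\infty$ on $\{\rho=b_i\}$. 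The constants should satisfy
\[
\frac{n}{n-1}h^2-2|\nabla h|+R_M>0
\]
on $\Omega_i$. The point of the scale-invariant ansatz $h=\phi(\log\rho)/\rho$ is that $R_M>C/\rho^2$ reduces this to an ODE inequality $\frac{n}{n-1}\phi^2-2|\phi'-\phi|+C>0$ in the variable $t=\log\rho$. This inequality must admit a solution going from $+\infty$ to $-\infty$ over a finite $t$-interval. Completing the square, the condition becomes exactly $C>\frac{n-1}{n}$: then $\phi'\le \phi-\frac{n}{2(n-1)}\phi^2-\frac C2$ has a negative-definite right-hand side, so a Riccati-type solution blows down in finite time. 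This computation identifies the threshold and fixes $\lambda$.

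Next, minimize the brane action $\mathcal A(\Omega)=\mathcal H^{n-1}(\partial^*\Omega)-\int(\chi_\Omega-\chi_{\Omega_0})h$ over Caccioppoli sets agreeing with $\{\rho\le a_i\}$ near the inner boundary. The blow-up of $h$ acts as a barrier and keeps the minimizer $\Sigma_i$ in the interior of $\Omega_i$. Regularity for $n\le7$ makes $\Sigma_i$ smooth. The second variation, combined with the Gauss equation and the inequality above, gives
\[
-\Delta_{\Sigma}+\tfrac12 R_\Sigma>0
\]
in the sense of the first eigenvalue, using the standard $\mu$-bubble stability estimate. For $n-1\ge3$ the operator $-\Delta+\frac{n-3}{4(n-2)}R_\Sigma$ has smaller coefficient, so the first eigenfunction of the stronger operator provides a conformal factor making $R_\Sigma>0$ on each component. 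For $n=3$, $\Sigma$ is a surface, and the same estimate with Gauss--Bonnet forces each component to be a sphere, which carries positive scalar curvature.

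Finally, set $K_i$ to be the region bounded by $\Sigma_i$ containing $\{\rho\le a_i\}$. Taking $a_{i+1}>\lambda a_i$ makes the $K_i$ nested and exhausting, and each is compact since $\Sigma_i\subset\Omega_i$ is compact by properness of $\rho$. If needed, discard bounded components of the complement so that each $K_i$ is a codimension-$0$ submanifold with $\partial K_i=\Sigma_i$.

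I expect the main obstacle to be the ODE step: producing $h$ that satisfies the strict inequality with the sharp constant $\frac{n-1}{n}$. This requires handling the error from $|\nabla\rho|\neq1$, from smoothing, and from the cross term $|\phi'-\phi|$. I would first try the exact Riccati solution $\phi(t)=-c\tan(\kappa t)+\frac{n-1}{n}$-shift, then absorb the errors using the strict margin $C-\frac{n-1}{n}>0$.
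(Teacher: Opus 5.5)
The paper gives no proof of this proposition. It is quoted from Chen, and the only hint is the later remark that Chen's weight function (his Lemma 3.4) yields the constant $\frac{n-1}{n}$. Your warped $\mu$-bubble argument is correct in outline. The natural comparison is with the $\mu$-bubble construction the paper does carry out, in the proof of Theorem~\ref{theoremproductupsc}:
\begin{itemize}
\item There the weight $h=-\frac{2\pi(1+\epsilon)}{r}\tan(\frac{\pi}{r}\rho)$ lives on the fixed annulus $B(p,r)\setminus B(p,\frac r2)$ and vanishes on the mean convex sphere $\partial B(p,r)$.
\item It keeps only the term $\frac{h^2}{2}$, discarding the gain $|A|^2\ge \frac{H^2}{n-1}$, and uses the single bound $\frac{C}{r^2}$ on the annulus. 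Hence the non-sharp $4\pi^2$.
\item In return, $h\le 0$ makes the enclosed region mean convex, so Rosenberg--Ruberman--Xu gives a psc metric with product boundary on that whole region. That is what the UPSC conclusion needs.
\end{itemize}
Your target is weaker: psc on $\partial K_i$ only. So $h$ may blow up at both ends and no mean convexity is needed. The scale-invariant ansatz $h=\phi(\log\rho)/\rho$, together with the $\frac{n}{n-1}h^2$ term, then gives exactly the threshold $\frac{n-1}{n}$.

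Several steps need correcting, though none affects the strategy.

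(i) Your ODE inequality points the wrong way. Write $Q_C(\phi)=\frac{n}{2(n-1)}\phi^2-\phi+\frac C2$. On the branch $\phi'\le\phi$ one has $\frac{n}{n-1}\phi^2-2|\phi'-\phi|+C=2(Q_C(\phi)+\phi')$. So the requirement is $\phi'>-Q_C(\phi)$, and a function with $\phi'\le -Q_C(\phi)$ violates it. The role of $C>\frac{n-1}{n}$ is that $Q_C$ then has no real zero; otherwise $\phi$ could never decrease past that zero. You may therefore take $\phi'=-Q_{C'}(\phi)$ with $\frac{n-1}{n}<C'<C$. This is a shifted tangent that traverses $\mathbb{R}$ in time $\log\lambda=2\pi\sqrt{(n-1)/(nC'-n+1)}$, and the left side then equals $C-C'>0$.

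(ii) That additive margin does not absorb the error from $|\nabla\rho|\le 1+\delta$. The error is about $\frac{n\delta}{n-1}\phi^2$, hence unbounded near the ends of the band. Either replace $\rho$ by $\rho/(1+\delta)$, which is $1$-Lipschitz; by scale invariance of the ansatz this only multiplies $C$ by a factor close to $1$. Or use $\phi'=-(1-\eta)Q_{C'}(\phi)$ to gain multiplicative slack.

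(iii) The conformal factor on $\Sigma$ must be the first eigenfunction of $-\Delta+\frac{n-3}{4(n-2)}R_\Sigma$. Its positivity follows because its quadratic form is $(1-s)\int|\nabla\psi|^2+s\int(|\nabla\psi|^2+\frac12R_\Sigma\psi^2)$ with $s=\frac{n-3}{2(n-2)}$. The eigenfunction of $-\Delta+\frac12R_\Sigma$ itself does not directly give psc.

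Finally, two small fixes. Your explicit $-\frac{\alpha}{\rho}\cot(\cdot)$ has the wrong sign: it is $-\infty$ on $\{\rho=a_i\}$. Also, $a_i$ and $b_i$ should be chosen as regular values of $\rho$ so that the bands have smooth boundary.
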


Our methods are motivated by and related to the above results. Chen's methods rely on the existence of $\mu$-bubble hypersurfaces. The lower bound on the scalar curvature in the assumption implies that the $\mu$-bubble hypersurfaces satisfy a stability inequality and admit positive scalar curvature metrics. In dimension $3$, the hypersurfaces are $2$ dimensional, and hence the classification of surfaces can then be used to conclude that the hypersurfaces are in fact the sphere $\mathbb{S}^2$. This then leads to the conclusion that the manifold is a connected sum of spherical components (see \cite{JW}), and hence admits a uniformly positive scalar curvature metric. This method of constructing a $\mu$-bubble hypersurface in manifolds with some lower bound on scalar curvature has been used in dimension $3$ with considerable success. In particular, we refer to Wei, Xu, and Zhang's proof in \cite{WXZ} that a $3$-manifold with uniformly positive scalar curvature and non-negative Ricci curvature has a quadratic volume growth, hence resolving a different conjecture by Gromov in the $3$ dimension case.

In dimension $3$, the classification of surfaces, the Gauss-Bonnet Theorem, and the equivalence of scalar and sectional curvatures of the hypersurfaces make it convenient to use $\mu$-bubble hypersurfaces to draw strong conclusions. In higher dimensions, the absence of any similar result makes it quite difficult to adapt this method. While $\mu$-bubbles have indeed been used in higher dimensions (for instance in \cite{ChodoshLiChao} or \cite{Gromovmetricinequalities}), they are usually coupled with Schoen and Yau's conformal descent or topological arguments for particular manifolds, and are hard to tailor to the completely general situation.

Our results aim at the construction of uniformly positive scalar curvature metrics on manifolds satisfying certain topological or geometric conditions, in dimensions $\geq 3$. The basic foundation of our arguments is the observation (see Lemma \ref{lemmajoiningpieces}) that if a manifold admits a decomposition into pieces such that each piece has a positive scalar curvature metric with product boundary, and the boundary metrics of the pieces are compatible with each other, then the manifold can be given a uniformly positive scalar curvature metric by joining the pieces via cylinders whose lengths are adjusted to ensure the required scalar curvature lower bounds.

All our manifolds are assumed connected and all the metrics we consider are complete unless specified otherwise. Our first result is the following:

\begin{theorem}\label{theoremmorsepsc}
    Let $M$ be an open, orientable, connected manifold of dimension $n\geq 3$ such that there exists a proper Morse function $f:M\to [a,\infty)$ for some $a\in \mathbb{R}$ with all critical points of $f$ having index $\leq n-3$. Then $M$ admits a complete Riemannian metric of uniformly positive scalar curvature.
\end{theorem}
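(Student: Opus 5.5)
The plan is to decompose $M$ along regular level sets of $f$ into compact cobordisms, each containing at most one critical point. On each piece we put a positive scalar curvature metric that is a product near the boundary, with all scalar curvatures bounded below by a common constant. We then glue the pieces via Lemma \ref{lemmajoiningpieces}, inserting cylinders whose lengths are tuned to keep the scalar curvature uniformly positive.

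\textbf{Step 1: decomposition.} Since $f$ is proper and bounded below, its critical values are discrete. After a small perturbation we may assume they are distinct, so each critical level contains a single critical point, and we label the critical values $c_1<c_2<\cdots$. Choose regular values $a<t_0<c_1<t_1<c_2<\cdots$ and set $W_0=f^{-1}[a,t_0]$, which is empty or a union of closed components, and $W_k=f^{-1}[t_{k-1},t_k]$. Each $W_k$ is a compact elementary cobordism from $\Sigma_{k-1}=f^{-1}(t_{k-1})$ to $\Sigma_k=f^{-1}(t_k)$, obtained by attaching one handle of index $\lambda_k\le n-3$ to $\Sigma_{k-1}\times[0,1]$. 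The first nonempty sublevel set $f^{-1}[a,t_1]$ is a disjoint union of $n$-balls, one for each index $0$ critical point. If some $c_k$ is infinite in number only through propriety, the sequence is infinite but locally finite.

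\textbf{Step 2: inductive metrics on the levels.} We build inductively psc metrics $h_k$ on $\Sigma_k$ and psc metrics $G_k$ on $W_k$ with $G_k=dt^2+h_{k-1}$ near $\Sigma_{k-1}$ and $G_k=dt^2+h_k$ near $\Sigma_k$.

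For the start, we put a torpedo metric on each $n$-ball, which is product near the boundary sphere with a round metric on the boundary.

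For the inductive step, $\Sigma_k$ is obtained from $\Sigma_{k-1}$ by a surgery of index $\lambda_k$. This means removing $S^{\lambda_k-1}\times D^{n-\lambda_k}$ and gluing in $D^{\lambda_k}\times S^{n-\lambda_k-1}$. The removed sphere has codimension $(n-1)-(\lambda_k-1)=n-\lambda_k\ge 3$ in the $(n-1)$-manifold $\Sigma_{k-1}$. We then apply the Gromov--Lawson surgery theorem in its cobordism form (Gajer, or Walsh's refinement). It gives a psc metric on the trace of the surgery $W_k$ that is a product $dt^2+h_{k-1}$ near $\Sigma_{k-1}$ and a product $dt^2+h_k$ near $\Sigma_k$, with $h_k$ psc. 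Index $0$ handles are new components and receive torpedo metrics. No orientability issue arises in this step.

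\textbf{Step 3: uniform bound and gluing.} Each $G_k$ has scalar curvature bounded below by some $\epsilon_k>0$, but $\epsilon_k$ may tend to $0$. We fix this by rescaling. Replacing $G_k$ by $s_k^2G_k$ with $s_k\le1$ multiplies the scalar curvature by $s_k^{-2}$, so we may arrange $R\ge1$ on every piece.

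The boundary metrics $s_k^2h_k$ and $s_{k+1}^2h_k$ then no longer match. We insert between the pieces a collar $\Sigma_k\times[0,L_k]$ with a warped metric $dt^2+\phi(t)^2h_k$, where $\phi$ interpolates monotonically and very slowly between $s_k$ and $s_{k+1}$. The scalar curvature of this collar is
\[
\phi^{-2}R_{h_k}-2(n-1)\phi''/\phi-(n-1)(n-2)(\phi'/\phi)^2 .
\]
Taking $L_k$ large makes $\phi',\phi''$ as small as needed, so the curvature stays $\ge \tfrac12\min R_{h_k}\,\phi^{-2}$. This quantity is at least a uniform constant, after first rescaling $h_k$ so that $R_{h_k}\ge 2$; the rescaling is absorbed by choosing the $s_k$ appropriately. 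This is exactly the content of Lemma \ref{lemmajoiningpieces}, which we invoke at this point.

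Completeness follows because each piece has length at least the collar width. Any divergent path therefore crosses infinitely many collars, each of length $L_k\ge1$, and so has infinite length.

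\textbf{Main obstacle.} The main obstacle is Step 2. We need the surgery theorem to produce a psc metric on the whole cobordism $W_k$ that is a product near both ends, rather than just a psc metric on the new level set. The codimension condition $n-\lambda_k\ge3$ is exactly what makes this cobordism version available. I would first try to cite Gajer's or Walsh's version directly.
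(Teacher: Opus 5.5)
Your proposal is correct and follows essentially the same route as the paper. Both arguments start from a torpedo metric on the ball around the minimum, extend across cobordisms between regular level sets of $f$ using Walsh's (Gajer's) cobordism form of the Gromov--Lawson surgery theorem to get psc metrics that are products near the boundary, and then upgrade to uniformly positive scalar curvature via Lemma \ref{lemmajoiningpieces}. The differences are minor: you cut into elementary cobordisms and handle later index-$0$ points and completeness explicitly, where the paper feeds multi-critical-point cobordisms to Walsh's theorem; and your extra rescaling of $h_k$ in Step 3 is unnecessary, since $R\ge 1$ on both rescaled product collars already gives $\phi^{-2}R_{h_k}\ge 1$ along any monotone interpolation $\phi$.
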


The proof is based on and can be considered a corollary of Gromov and Lawson's construction of positive scalar curvature metrics on surgeries (\cite{GL}), and Walsh's results on the existence of positive scalar curvature metrics on compact cobordisms (\cite{MWalsh}).   

The next result shows that if an open and orientable manifold of dimension between $3$ and $7$ admits a complete Riemannian metric of positive scalar curvature coupled with a compact exhaustion with minimal boundaries, then it always admits a complete metric of uniformly positive scalar curvature.

\begin{theorem}\label{theorempscexhaustion}
    Let $(M,g)$ be an open, orientable, connected, and complete Riemannian manifold of dimension $3\leq n\leq 7$ with positive scalar curvature. If there exists a compact exhaustion $\{U_i\}_{i=1}^\infty$ of $M$ such that each $\partial U_i$ is a closed minimal hypersurface in $M$, then there exists a complete Riemannian metric of uniformly positive scalar curvature on $M$.
\end{theorem}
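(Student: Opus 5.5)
We will reduce Theorem \ref{theorempscexhaustion} to the joining lemma (Lemma \ref{lemmajoiningpieces}). That lemma says: if $M$ decomposes into compact pieces, each carrying a positive scalar curvature metric that is a product near its boundary, and the boundary metrics of adjacent pieces agree, then $M$ carries a UPSC metric.

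\textbf{Pieces.} Set $W_1=U_1$ and $W_i=\overline{U_i\setminus U_{i-1}}$ for $i\ge 2$. After passing to a subsequence, we may assume $U_{i-1}\subset \operatorname{int} U_i$. Then each $W_i$ is a compact cobordism between $\partial U_{i-1}$ and $\partial U_i$, and both boundary components are minimal in $(M,g)$. Each $W_i$ inherits positive scalar curvature $R_g\ge \epsilon_i>0$ by compactness, but $\epsilon_i$ may tend to $0$. This is harmless: the joining lemma only needs positive scalar curvature on each piece, with the cylinder lengths used to control the uniform bound.

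\textbf{Minimal boundary to product boundary.} This is the key step, and the one I expect to be the main obstacle. Let $\Sigma=\partial U_i$ with unit normal $\nu$. On each piece we want a deformation, supported in a collar of the boundary, to a psc metric that is exactly $g|_\Sigma+dt^2$ near $\Sigma$. Doing this directly is problematic for two reasons:
\begin{itemize}
\item $g|_\Sigma$ need not itself have positive scalar curvature.
\item The Gauss equation gives $R_\Sigma = R_g - 2\mathrm{Ric}(\nu,\nu) - |A|^2$ (using $H=0$), which can be negative.
\end{itemize}
So we instead use minimality in a different way.

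First apply the conformal/Schoen--Yau trick. Minimality lets us use the gluing/doubling results for psc with mean convex (here $H=0$) boundary, in the style of Gromov--Lawson and B\"ar--Hanke. Take the collar metric $dt^2+g_t$, where $g_t$ is the induced metric on the parallel hypersurfaces. Bending it by a warping $\phi(t)^2 g_t$-type deformation near $t=0$, and using $H=0\ge 0$, we can make the boundary slightly strictly mean convex from each side while keeping $R>0$. Then Gromov's (or B\"ar--Hanke's) theorem converts a psc metric with positive mean curvature on the boundary into one that is a product near the boundary, with boundary metric psc.

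To make the boundary metric common to both adjacent pieces, we have two options:
\begin{itemize}
\item Perform the deformation symmetrically on the two sides of $\Sigma$ inside $M$. Since $\Sigma$ is two-sided and the same hypersurface serves as boundary of $W_i$ and $W_{i+1}$, with opposite normals, each side sees mean curvature $0$, and the doubling-type construction yields the same boundary metric $h_\Sigma$ from both sides.
\item Alternatively, first run the conformal method on $\Sigma$. Because $3\le n\le7$, minimal stability is not available in general, but the conformal Laplacian $-\Delta_\Sigma+\tfrac{n-3}{4(n-2)}R_\Sigma$ may be replaced by the Jacobi-type operator. In that case we produce a psc metric $h_\Sigma$ on $\Sigma$ and interpolate through a psc concordance.
\end{itemize}
If two different psc boundary metrics arise on the two sides, we connect them by a psc cylinder, using isotopy/concordance of psc metrics.

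\textbf{Assembly.} With psc pieces $(W_i,g_i)$ that are products near their boundaries with matching boundary metrics, apply Lemma \ref{lemmajoiningpieces}. It inserts cylinders $\Sigma\times[0,L_i]$. Rescaling each piece so that its scalar curvature is $\ge 1$ (with compatible rescaling of the neighbours via the inserted cylinders, as the lemma allows) gives a complete metric with $R\ge\epsilon>0$. Completeness follows since the cylinders have length bounded below and the pieces exhaust $M$.

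The main difficulty remains the second step. There I would first try the bending-plus-Gromov conversion that uses $H=0$ from both sides, and fall back on concordance-joining if the two sides' boundary metrics differ.
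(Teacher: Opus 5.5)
Your reduction to Lemma \ref{lemmajoiningpieces} is the right frame, but the key step fails. You attribute to Gromov/B\"ar--Hanke the claim that a psc metric with strictly mean convex boundary can be deformed to a psc metric that is a product near the boundary, with psc boundary metric. That claim is false. Take $T^{n-2}\times D$ with the flat metric on $T^{n-2}$ and $D$ a geodesic disc of radius $r<\pi/2$ in the unit round $S^2$. It has $R\equiv 2$ and boundary mean curvature $\cot r>0$. Its boundary $T^{n-1}$, however, carries no psc metric, so no psc metric on $T^{n-2}\times D$ is a product $k+dt^2$ near the boundary, since on such a collar $R=R_k$. What mean convexity buys is the doubling property (a totally geodesic, reflection-symmetric boundary), not a product structure.

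The same obstruction hits your setting directly. A given minimal $\partial U_i$ may be unstable and carry no psc metric at all; already for $n=3$, think of a Clifford torus in a region of $M$ isometric to part of the round $S^3$. Then no psc metric on the adjacent piece can be a product near it, however you bend the collar. Your second option does not rescue this: for an unstable minimal hypersurface the Jacobi operator has negative first eigenvalue, so it gives no positivity of the conformal Laplacian. The concordance fallback is also unjustified, since two psc metrics on the same closed manifold need not be psc-concordant.

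The missing idea is to change the exhaustion so that its boundaries are \emph{stable}. Between consecutive minimal boundaries $\partial U_i$ and $\partial U_{i+1}$, which act as barriers, minimise area in the homology class. This gives a smooth stable minimal hypersurface separating them; this is exactly where $3\le n\le 7$ enters, through regularity of the minimiser, contrary to your remark. Use these hypersurfaces as the new boundaries. For a stable minimal $\Sigma$ in $R_g>0$, the stability inequality gives $\int_\Sigma(|\nabla u|^2+\tfrac12R_\Sigma u^2)\ge\tfrac12\int_\Sigma(R_g+|A|^2)u^2>0$, so the conformal Laplacian of $\Sigma$ is positive, i.e.\ $\zeta_{1,\iota^*g}>0$. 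Together with $\eta_{1,g}>0$ on each piece (from $R_g>0$ and $H=0$), Corollary \ref{psc-conformal} gives psc metrics on the pieces that are products near their boundaries. The boundary metric produced there is the conformal change of $\iota^*g$ by the first eigenfunction of the conformal Laplacian of $\Sigma$, which depends only on $\Sigma$. Hence adjacent pieces automatically induce the same boundary metric, and Lemma \ref{lemmajoiningpieces} applies with no concordance argument.
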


It is well known that connected sums of closed manifolds admitting positive scalar curvature metrics admit uniformly positive scalar curvature metrics. This result generalises this idea in principle to manifolds which have any positive scalar curvature metric along with an exhaustion with minimal boundaries. The argument involves showing the existence of a positive scalar curvature metric with product boundary on each $U_{i+1}\setminus U_i$, and then using the above-mentioned method to glue the pieces together to get a uniformly positive scalar curvature metric. This also shows that any positive scalar curvature metric on $\mathbb{T}^{n-2}\times\mathbb{R}^2$ cannot have a compact exhaustion with minimal hypersurface boundaries. In particular, it shows that open manifolds which do not admit any uniformly positive scalar curvature metric cannot have separating minimal hypersurfaces outside a compact set.

Before stating our next theorem, let us clarify our convention for mean curvature. Let $M$ be a compact manifold with boundary $X$. We denote the mean curvature of the closed hypersurface $X$ by $H_X$, where $H_X$ is computed with respect to the normal vector field along $X$ which points towards the interior of $M$, unless mentioned otherwise. By our convention, the $n$ dimensional unit sphere $\mathbb{S}^n$ with the usual metric on $\mathbb{R}^{n+1}$ has positive mean curvature with respect to the unit normal vector field on $\mathbb{S}^n$ pointing to the origin.

For any point $p$ in a manifold $M$, and any $r>0$, we denote by $B(p,r)$ the closed ball of radius $r$ centred at $p$, and by $\partial B(p,r)$, we denote its boundary.

\begin{theorem}\label{theoremproductupsc}
    Let $M$ be an open, orientable, and connected manifold of dimension $4\leq n\leq 7$ with product ends, equipped with a complete Riemannian metric $g$ that has at most $C$-quadratic decay of scalar curvature at infinity for some constant $C\geq 4\pi^2$. For any point $p\in M$, there exists a radius $R>0$ such that if $\partial B(p,r)$ is mean convex for some $r\geq R$, then $M$ admits a complete Riemannian metric of uniformly positive scalar curvature.
\end{theorem}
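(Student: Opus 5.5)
The plan is to reduce Theorem \ref{theoremproductupsc} to the gluing principle of Lemma \ref{lemmajoiningpieces}. The goal is a decomposition of $M$ into a compact core together with the product ends, where each piece carries a positive scalar curvature metric that is a product near its boundary, and the boundary metrics agree. Because $M$ has product ends, each end is diffeomorphic to $N_j\times[0,\infty)$ for a closed $(n-1)$-manifold $N_j$. If every cross-section $N_j$ admits a positive scalar curvature metric $h_j$, then the end carries the metric $h_j+dt^2$, which has scalar curvature bounded below by $\min R_{h_j}>0$. It then suffices to produce a positive scalar curvature metric on the compact core that is a product $h_j+dt^2$ near each boundary component, and to glue using Lemma \ref{lemmajoiningpieces}. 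So there are two tasks: show the cross-sections admit positive scalar curvature, and build a suitable metric on the core.

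\textbf{Step 1 (the cross-sections carry PSC).} Fix $p$. Following Chen's argument for the Proposition above, I would construct a $\mu$-bubble in an annular region $B(p,r_2)\setminus B(p,r_1)$ with $r_1\ge R$ large. I would take $\mu=\frac{2\pi}{L}\tan(\frac{\pi}{L}(\rho-c))$, or the variant adapted to the decay $C/\rho^2$, where $\rho$ is a smoothing of $d(p,\cdot)$. The hypothesis $C\ge4\pi^2$ is what makes $R_M-\frac{n}{n-1}\mu^2-2|\nabla\mu|>0$ possible on an annulus of width comparable to $r_1$. The dimension range $4\le n\le7$ gives regularity of the bubble $\Sigma$. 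The stability inequality, combined with the Schoen--Yau conformal argument, then gives $\Sigma$ (and each of its components) a PSC metric. Since $\Sigma$ separates and the end is a product $N\times[0,\infty)$, a homology argument shows $\Sigma$ represents $[N]$. The part I am least sure of is upgrading this to a PSC metric on $N$ itself; I would try to choose the annulus inside the product region and argue that $\Sigma$ is cobordant to $N$ within $N\times I$. I expect this to need either that $\Sigma$ is a graph over $N$, or surgery/bordism input, and it is one of the delicate points.

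\textbf{Step 2 (use mean convexity).} The radius $R$ is chosen large enough that Step 1 applies outside $B(p,R)$. Given a mean convex sphere $\partial B(p,r)$ with $r\ge R$, I would use it as a barrier. Together with the $\mu$-bubble in the outer annulus, it bounds a region on which a minimizing problem produces a hypersurface $\Sigma'$ whose mean curvature has a sign, and the interior compact region $K$ has PSC metric $g$ with mean convex boundary. By the standard deformation of Gromov--Lawson/Bär--Hanke type, a PSC metric with mean convex boundary $H\ge0$ can be deformed to PSC with a product metric near the boundary. This gives a metric on $K$ that is a product near $\partial K$, with boundary metric $g|_{\partial K}$ of positive scalar curvature, the latter again coming from the stability or mean convexity via the Gauss equation. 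Finally I would connect $g|_{\partial K}$ to the cross-section metric $h_j$ on $N_j\times[0,1]$ through a PSC cylinder. This uses either that the region between $\partial K$ and a far slice is itself a stable $\mu$-bubble piece, or a Walsh-type concordance as in the proof of Theorem \ref{theoremmorsepsc}. Lemma \ref{lemmajoiningpieces} then gives uniform positivity.

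\textbf{Main obstacle.} I expect the hardest step to be the matching of boundary metrics. The PSC metric on $\partial K$, or on the bubble, must be shown to be isotopic or concordant, within PSC, to a PSC product metric on the end cross-section. My first attempt would be to choose the mean convex sphere itself as $\partial K$ and to show, using the product structure, that $\partial B(p,r)$ is isotopic to a slice $N\times\{t\}$. I would then run the $\mu$-bubble argument relative to the boundary $\partial B(p,r)$, with mean convexity providing the boundary barrier. This should yield a PSC metric on $(B(p,r')\setminus B(p,r))$ with product ends, which serves as the required concordance.
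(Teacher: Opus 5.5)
There is a genuine gap. Your plan never actually constructs a hypersurface that is both mean convex as the boundary of the compact piece and conformally PSC. Your $\Sigma'$ comes from an unspecified minimizing problem, and its PSC is attributed to ``stability or mean convexity via the Gauss equation.'' Yet these two properties are exactly what is needed.

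The missing idea is where to place the $\mu$-bubble. The paper works in the band $X=B(p,r)\setminus B(p,\frac r2)$ \emph{inside} the mean convex sphere, with $\partial^-X=\partial B(p,r)$. It uses a weight $h\le 0$ that vanishes on $\partial B(p,r)$ and tends to $-\infty$ on $\partial B(p,\frac r2)$, essentially $h=-\frac{2\pi}{r}\tan(\frac{\pi}{r}\rho)$ with $\rho$ a smoothed distance to $\partial B(p,r)$. Vanishing on $\partial B(p,r)$ is admissible in Lemma \ref{lemmamububbleexistence} because that sphere is mean convex.
\begin{itemize}
\item Since $R_M\ge C/r^2$ on $X$ and $C\ge 4\pi^2$, one gets $\frac{R_M}{2}+\frac{h^2}{2}-|\nabla h|\ge 0$. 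Note that $h^2$ enters with a plus sign; your displayed condition has it with the wrong sign. Stability then gives $\zeta_{1,\iota^*g}>0$ on the bubble $\xi=\partial V$.
\item The first variation gives $H_\xi=h\le 0$ with respect to the normal leaving $V$, so $H_{\partial V}\ge 0$. Together with $R_M>0$, this gives $\eta_{1,g}>0$.
\item Corollary \ref{psc-conformal} then yields a PSC metric $g'$ on $V$ that is a product near $\partial V$, and the end is simply given $\iota^*g'+dt^2$ on $\partial V\times[0,\infty)$.
\end{itemize}
So you never need a PSC metric on the cross-section $N$ (your Step 1), nor a concordance between $\partial K$ and a slice metric (your ``main obstacle''). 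The configurations you actually describe put the sphere on the wrong side. For a region outside $B(p,r)$, such as your outer annulus or the proposed concordance on $B(p,r')\setminus B(p,r)$, the sphere $\partial B(p,r)$ is mean \emph{concave} and is no barrier. Taking $K=B(p,r)$ also fails, because nothing makes $\partial B(p,r)$ conformally PSC.

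Second, the deformation principle you invoke is false as stated: a PSC metric with mean convex boundary cannot in general be deformed to one that is a product near the boundary. A spherical cap smaller than a hemisphere, times a flat $T^{n-2}$, has positive scalar curvature and strictly mean convex boundary $T^{n-1}$, which admits no PSC metric. One also needs positivity of the boundary conformal Laplacian, which is why the Rosenberg--Ruberman--Xu criterion demands both $\eta_{1,g}>0$ and $\zeta_{1,\iota^*g}>0$. The induced boundary metric itself need not be PSC, and mean convexity with the Gauss equation says nothing about $R_{\partial K}$.

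Finally, the identification you flag as delicate, that the end beyond the bubble is $\partial V\times[0,\infty)$, is needed in the paper's route as well. The paper handles it by perturbing the projection $f$ of the product end so that $\xi$ is transverse to the flow lines of $f$. This makes $\xi\to f^{-1}(0)$ a degree-one covering, hence a diffeomorphism. Homology alone does not give this: a slice with a $1$-handle attached (both feet in one component) is separating and homologous to the slice, but is $N\#(S^1\times S^{n-2})$. So whichever route you take, this graph property of the bubble needs an actual argument.
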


Manifolds with product ends are interiors of compact manifolds with boundaries. In dimensions $4$ and $5$, we are aware of the non-existence of UPSC metrics in interiors of compact manifolds satisfying certain topological conditions. Here, instead of constraints on the homology groups, we rely on the existence of a mean convex hypersurface to construct a uniformly positive scalar curvature metric. We use the mean convex hypersurface to obtain a mean convex $\mu$-bubble hypersurface, and we then show that this hypersurface bounds a compact submanifold of $M$ which admits a positive scalar curvature metric with product boundary. The result follows by showing that this metric can be extended through the product region to a UPSC metric on the entire manifold. Thus, for any positive scalar curvature metric in a manifold which does not admit a uniformly positive scalar curvature metric, either the scalar curvature is forced to decay at a fast enough rate, or there are no separating mean convex hypersurfaces outside a compact set. Note that our decay constant $C$ is not the optimal constant in \cite{Chen}. We refer to the remarks after the theorem for a detailed discussion on this.

We also show some applications of our results. In particular, we show that Riemannian manifolds with a positive scalar curvature metric and a mean convex or mean concave exhaustion that admit no uniformly positive scalar curvature metric must admit a foliation near their ends by mean convex or mean concave hypersurfaces, respectively. Examples of such manifolds include manifolds which are weakly SYS but not SYS (see \cite{SYS}). We refer to Corollary \ref{corollaryfoliation} for the detailed statement and proof. Our other applications include showing that the manifolds considered above admit positive scalar curvature metrics with desired rates of volume growth, and relating the presence of a positive scalar curvature metric with mean convex stable boundary on a compact manifold to the presence of a uniformly positive scalar curvature metric on its interior.

The organisation of the paper is as follows. In Section \ref{sectionprelim} we discuss the preliminary results and tools used in our proofs, particularly $\mu$-bubble hypersurfaces and their relevant properties. In Section \ref{sectionjoiningpieces} we prove Lemma \ref{lemmajoiningpieces}, and use it to prove Theorem \ref{theoremmorsepsc} and Theorem \ref{theorempscexhaustion}. Section \ref{sectionproductend} is dedicated to the proof of Theorem \ref{theoremproductupsc}. Finally, Section \ref{sectionapplications} contains some applications of our results, as discussed above.

\subsection*{Acknowledgement}
The author would like to thank Soma Maity for patient discussions that helped in the conception of this work, Yuguang Shi and Jian Wang for their comments, suggestions, and encouragements, and Antoine Song and Mark Walsh for helpful answers and clarifications regarding their works.

\section{Preliminaries}\label{sectionprelim}

In this section we describe the $\mu$-bubble hypersurface and discuss some of its properties that shall be crucially used in our arguments. The $\mu$-bubble hypersurfaces as introduced by Gromov are hypersurfaces inside compact bands that arise as minismisers of a certain functional, and can be shown to admit a metric of positive scalar curvature because of their stability. In addition to this, because they are critical points of a functional, their first variation is zero, which gives us a certain amount of control over their mean curvature. We shall exploit these properties of $\mu$-bubble hypersurfaces for our deductions. We first discuss the definition and construction of $\mu$-bubble hypersurfaces.

\begin{defn}
 
A Riemannian band $(M,g)$ is defined as a connected compact manifold $M$ with a Riemannian metric $g$ together with a decomposition of the boundary $\partial M$ as a union $\partial M = \partial^- M\cup \partial^+ M$, where both $\partial^+M$ and $\partial^- M$ are non-empty and possibly disconnected.
   
\end{defn}

Let $(M,g)$ be a Riemannian band of dimension $3\leq n\leq 7$. 
Consider a smooth function $h$ defined on the interior of $M$ such that $h \rightarrow -\infty$ at $\partial^+ M$ and $h \rightarrow +\infty$ at $\partial^- M$. Choose a Caccioppoli set $\Omega_0$ in $M$ such that the boundary of $\Omega_0$ contains $\partial^- M$, but lies away from $\partial^+ M$. That is, $\partial \Omega_0=\partial^+ \Omega_0 \cup \partial^- \Omega_0$, $\partial^ - \Omega_0 = \partial^- M$, and $\partial^+ \Omega_0\subset \mathring{M}$ where $\mathring{M}$ denotes the interior of $M$. Now consider the functional $\mathcal{A}(\Omega)$ where $\Omega$ is any Caccioppoli set in $M$ with $\Omega \Delta \Omega_0 \Subset \mathring{M}$ defined by the following expression:
\begin{align}\label{mudef}
    \mathcal{A}(\Omega)=\int_{\partial^* \Omega\setminus \partial M} d\mathcal{H}^{n-1} - \int_{M}(\chi_{\Omega}-\chi_{\Omega_0})hd\mathcal{H}^n.
\end{align}
Here, $\chi_{\alpha}$ is the characteristic function of any set $\alpha$, $d\mathcal{H}^{n-1}$ and $d\mathcal{H}^n$ denote the  $n-1$ and $n$ dimensional Haussdorf measures respectively, and $\partial^* \Omega$ is the reduced boundary of $\Omega$.

Gromov had conjectured the existence of a smooth minimiser of the functional $\mathcal{A}$ amongst all Caccioppoli sets on a Riemannian band, and the proof of the existence and smoothness (for dimension $3\leq n\leq 7$) of such a minimiser was carried out by Zhu in \cite{muExistence} (see also Chodosh and Li \cite{ChodoshLiChao}). Such a minimiser $\Omega'$ of $\mathcal{A}$ is called a $\mu$-bubble, and any component of the boundary $\partial^+ \Omega'$ is called a $\mu$-bubble hypersurface, which we henceforth denote by $\xi$. In addition, the $\mu$-bubble hypersurface $\xi$ (or a disjoint union of some components of $\xi$) is by construction homologous to $\partial^- M$ and is a separating hypersurface in the Riemannian band $M$. That is, it separates the ambient manifold $M$ into multiple connected components such that after the separation no such connected component can have a component of both $\partial^+M$ and $\partial^-M$ in its boundary. For a proof of this property of the $\mu$-bubble hypersurface, we refer to Wei, Xu, and Zhang in \cite{WXZ}.

For any Caccioppoli set $\bar{\Omega}\subset M$ as described above and $\bar{\xi}=\partial^- \bar{\Omega}$, let $\nu_{\bar{\xi}}$ be the unit normal vector field on $\bar{\xi}$ pointing outward in $\bar{\Omega}$, that is, pointing towards $\partial^+ M$.
For any smooth function $\psi$ on $\bar{\xi}$ and any vector field $V_{\psi}$
on $M$, such that $V_{\psi}$ vanishes outside a small neighborhood of $\bar{\xi}$ and agrees with $\psi \nu_{\bar{\xi}}$ on $\bar{\xi}$, denote  the flow generated by $V_{\psi}$ as $F_t$. Then, denoting  $\Omega_t=F_t(\bar{\xi})$, we get the following first variation formula of $\mathcal{A}$ at $\bar{\Omega}$:
\begin{align}\label{equationfirstvariation}
    \frac{d}{dt}_{\big|t=0}\mathcal{A}(\Omega_t)=\int_{\bar{\xi}}(H_{\bar{\xi}}-h)d\mathcal{H}^{n-1}. 
\end{align}

Here $H_\xi$ is the mean curvature of $\xi$ with respect to the unit normal vector field given by $-\nu_{\bar{\xi}}$. By virtue of $\Omega'$ being a minimiser of $\mathcal{A}$, we then have that
\begin{align}
    H_\xi=h.
\end{align}

The minimality of $\Omega'$ also implies that the second variation of $\mathcal{A}$ at $\xi$ is nonnegative. Thus, we have the following formula for the $\mu$-bubble hypersurface $\xi$:

\begin{align}\label{secondvariation}
    0\leq \int_{\xi}( -\phi \Delta_{\xi}\phi + \frac{R_{\xi}}{2}\phi^2 - \frac{R_{M}}{2}\phi^2 - \frac{|A|^2}{2}\phi^2 - \frac{h^2}{2}\phi^2 - \langle\nabla_{M}h,\nu_{\xi}\rangle\phi^2) d\mathcal{H}^{n-1}.
\end{align}
Here $\phi$ is any smooth function on $\xi$, $\Delta_{\xi}$ is the Laplacian on $\xi$ with the induced metric, $R_{\xi}$ is the intrinsic scalar curvature of ${\xi}$, $R_{M}$ is the scalar curvature of $M$ restricted to $\xi$, $|A|^2$ is the squared norm of the second fundamental form of $\xi$, $\nabla_{M}h$ is the gradient of $h$, and $\nu_{\xi}$ is the outward unit normal vector of $\xi$ as described above.

The assumption that $h\to\pm \infty$ at the boundaries of $M$ can be loosened, with constraints depending on the mean curvature of $\partial^{\pm} M$. To make it explicit, we state Lemma $4.2$ by Räde in \cite{Rade}.

\begin{lemma}[Räde, \cite{Rade}]\label{lemmamububbleexistence}
  If $n \leq 7$ and $H_{\partial^\pm M} > \pm h$ on $\partial^\pm M$, there is a smooth $\mu$-bubble $\Omega'$ which is a minimiser of $\mathcal{A}$ on $M$.
\end{lemma}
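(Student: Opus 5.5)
The plan is to follow the standard direct-method argument for prescribed mean curvature functionals, using the boundary mean curvature inequalities as barriers instead of the blow-up of $h$.

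\emph{Step 1: existence of a minimiser.} Extend $h$ smoothly up to $\partial M$, so that $h$ is bounded on $M$. Fix a reference Caccioppoli set $\Omega_0$ as in the construction above. For any admissible $\Omega$ the second term of \eqref{mudef} is bounded in absolute value by $\|h\|_\infty \mathrm{vol}(M)$. Hence $\mathcal{A}$ is bounded below, and along a minimising sequence the perimeters are uniformly bounded. By BV compactness we get $L^1$ convergence of a subsequence of the $\chi_{\Omega_k}$. Lower semicontinuity of perimeter, together with continuity of the bulk term, then gives a minimiser $\Omega'$ among Caccioppoli sets in $M$. Here we relax the constraint $\Omega\Delta\Omega_0\Subset\mathring M$ and allow $\partial^*\Omega'$ to touch $\partial M$, minimising the perimeter relative to $\mathring M$.

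\emph{Step 2: the boundary barrier argument (main obstacle).} We must show that $\partial^*\Omega'\cap\mathring M$ stays a positive distance away from $\partial M$, so that the constraint is recovered. Near $\partial^+M$, use the foliation by equidistant hypersurfaces $\Sigma_s=\{d(\cdot,\partial^+M)=s\}$ for $s\in[0,\delta]$. By continuity, $H_{\Sigma_s}>-h$ still holds on $\Sigma_s$ for small $s$, with the sign of $H$ and the normal chosen consistently with \eqref{equationfirstvariation}.

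Suppose $\partial\Omega'$ entered the collar $\{d<\delta\}$. Replace $\Omega'$ by $\Omega'\setminus\{d<s\}$, or by its union with the collar according to orientation, for the appropriate $s$. Integrating the first variation along the flow of $\nabla d$, using the divergence theorem on the region swept out, one shows that this strictly decreases $\mathcal{A}$. The point is that $\operatorname{div}(\nabla d)=\pm H_{\Sigma_s}$ dominates the $h$ contribution, which contradicts minimality. The same computation with reversed signs handles $\partial^-M$ using $H_{\partial^-M}>h$.

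This calibration-type comparison is the delicate step. One must handle the reduced boundary meeting $\partial M$ and get the signs right in the convention for $H$. I would first try the standard calibration estimate $\int_{\partial^*\Omega'\cap U}\langle\nabla d,\nu\rangle\le\mathcal{H}^{n-1}(\partial^*\Omega'\cap U)$, applied on the collar $U$.

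\emph{Step 3: regularity.} Once $\partial^*\Omega'$ is confined to a compact subset of $\mathring M$, $\Omega'$ is a local minimiser of perimeter plus a bounded bulk term. It is therefore an almost-minimiser, so its boundary is $C^{1,\alpha}$ away from a singular set of Hausdorff dimension $\le n-8$. This singular set is empty since $n\le7$. Elliptic regularity for the Euler--Lagrange equation $H=h$ then yields smoothness, as in Zhu \cite{muExistence} and Chodosh--Li \cite{ChodoshLiChao}.
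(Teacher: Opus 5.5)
The paper does not prove this lemma; it quotes it from R\"ade. Your argument therefore has to stand on its own, and Steps 1--2 contain a genuine gap.

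\textbf{The relaxation loses the boundary condition.} Once you drop $\Omega\Delta\Omega_0\Subset\mathring M$ and charge perimeter only in $\mathring M$, nothing distinguishes $\partial^-M$ from $\partial^+M$: a set with nonzero trace on $\partial^+M$ pays nothing for it. Showing that $\partial^*\Omega'\cap\mathring M$ avoids a collar of $\partial M$ only says that $\Omega'$ is a.e.\ full or a.e.\ empty on each collar component. That does not give back the constraint.

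\textbf{A counterexample.} Take $M=\Sigma\times[0,L]$ with a product metric, $\partial^-M=\Sigma\times\{0\}$, $h(t)=1-2t/L$ and $\Omega_0=\Sigma\times[0,L/2]$.
\begin{itemize}
\item The hypotheses hold: $H_{\partial^\pm M}=0$ and $h(L)<0<h(0)$.
\item Every admissible $\Omega$ has $\mathcal{A}(\Omega)\ge|\Sigma|$. The flux of $\partial_t$ bounds the perimeter below by $|\Sigma|$, and the bulk term is nonnegative.
\item Equality holds for the slice $t=L/2$, which is the $\mu$-bubble.
\item However, $\mathcal{A}(M)=\mathcal{A}(\emptyset)=L|\Sigma|/4$. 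So for $L<4$ your relaxed minimiser has no interior boundary at all.
\end{itemize}

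\textbf{Step 2 fails for the same reason.} Let $d=d(\cdot,\partial^+M)$, let $H_\Sigma$ be the mean curvature of its level sets with respect to $\nabla d$, let $\nu$ be the outer normal of $\Omega'$, and set $E=\Omega'\cap\{d<s\}$. For a.e.\ small $s$ the divergence theorem gives
\[
\mathcal{A}(\Omega'\setminus\{d<s\})-\mathcal{A}(\Omega')=\int_E(h-H_\Sigma)\,d\mathcal{H}^n-\int_{\partial^*\Omega'\cap\{0<d<s\}}\bigl(1+\langle\nabla d,\nu\rangle\bigr)\,d\mathcal{H}^{n-1}+\mathcal{H}^{n-1}\bigl(\text{trace of }\Omega'\text{ on }\partial^+M\bigr).
\]
The last term is uncontrolled, so the claimed strict decrease fails whenever $\Omega'$ reaches $\partial^+M$. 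Separately, your barrier inequalities are swapped. Near $\partial^+M$ you need $H>h$, and near $\partial^-M$ you need $H>-h$, both with respect to the normal pointing into $M$.

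\textbf{How to repair it.} You need an $L^1$-closed way to keep the boundary condition.
\begin{itemize}
\item Choose $\delta>0$ so that the level sets of $d^\pm=d(\cdot,\partial^\pm M)$ with $d^\pm\le2\delta$ are smooth and satisfy $H>h$ near $\partial^+M$ and $H>-h$ near $\partial^-M$. This follows from continuity and compactness.
\item Minimise $\mathcal{A}$ over Caccioppoli sets with $\{d^-<\delta\}\subset\Omega$ and $\Omega\cap\{d^+<\delta\}=\emptyset$, with $\Omega_0$ in this class. Equivalently, attach exterior collars and prescribe $\Omega$ there.
\item This class is $L^1$-closed, so your Step 1 works inside it. Moreover, $\partial^*\Omega'\cap\{d^+=\delta\}$ is now charged in the perimeter.
\item Compare with $\Omega'\setminus\{d^+<s\}$ for $\delta<s<2\delta$, now with $E=\Omega'\cap\{\delta<d^+<s\}$. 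The charge on $\{d^+=\delta\}$ cancels exactly against the flux through that level set. What remains is $\int_E(h-H_\Sigma)-\int_{\partial^*\Omega'\cap\{\delta<d^+<s\}}(1+\langle\nabla d^+,\nu\rangle)$, which is negative unless $|E|=0$.
\item The analogous comparison with $\Omega'\cup\{d^-<s\}$ uses $H>-h$.
\end{itemize}
Hence the obstacle is never touched and $\Omega'\Delta\Omega_0\Subset\mathring M$. Your Step 3 on regularity then applies as written.
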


In the above lemma, the mean curvatures $H_{\partial^+ M}$ and $H_{\partial^- M}$ are both taken with respect to the unit normal vector fields pointing inward towards the interior of $M$. Hence, it suffices for us to choose a function $h$ on $M$ such that $h<H_{\partial^+M}$ and $h>-H_{\partial^- M}$. If it is known a priori that $H_{\partial^+ M} > -H_{\partial^- M}$, $h$ can be chosen to be a constant function, and we get a constant mean curvature hypersurface in $M$. If $H_{\partial^+ M} > 0 > -H_{\partial^- M}$, we are thus guaranteed the existence of a minimal hypersurface in $M$. On the other hand, if nothing is known about the mean curvature of the boundary, we are forced to choose $h$ such that $h \rightarrow -\infty$ at $\partial^+ M$ and $h \rightarrow +\infty$ at $\partial^- M$, in which case we do not have any bounds on the mean curvature of the hypersurface. In this present article, we shall be dealing with the situation where $\partial^- M$ is mean convex, that is, $H_{\partial^-M}>0$. Hence, for us, it shall be convenient to choose $h$ such that $h\to -\infty$ at $\partial^+ M$, and $h=0$ at $\partial^- M$.

In suitable situations, the $\mu$-bubble hypersurface $\xi$ admits a metric of positive scalar curvature conformal to the induced metric on $\xi$, following an argument similar to Schoen and Yau's construction of positive scalar curvature metric on stable minimal hypersurfaces in \cite{SY}. Assume that $R_M\geq\epsilon >0$ on $M$. Also, assume that a function $h$ can be chosen such that

    \begin{align}\label{hsquare}
        \frac{h^2}{2}-|\nabla_{M}h|>-\frac{\epsilon}{2},
    \end{align}
and $h$ satisfies the constraints on the boundary values in Lemma \ref{lemmamububbleexistence}.
 Therefore, we can consider the functional $\mathcal{A}$ as defined in Equation \ref{mudef}, and by the existence and smoothness of the minimiser in any dimension $3\leq n \leq 7$, we get a smooth closed $\mu$-bubble hypersurface $\xi$ which is separating in $M$.

We now consider the second variation of $\xi$, which is given by Equation \ref{secondvariation}:

\begin{align*}
    0\leq \int_{\xi}( -\phi \Delta_{\xi}\phi + \frac{R_{\xi}}{2}\phi^2 - \frac{R_{M}}{2}\phi^2 - \frac{|A|^2}{2}\phi^2 - \frac{h^2}{2}\phi^2 - \langle\nabla_{M}h,\nu_{\xi}\rangle\phi^2) d\mathcal{H}^{n-1},
\end{align*}
where $\phi$ is any smooth function on $M$. This is equivalent to 
\begin{align}\label{stability2}
    \int_{\xi} (-\phi \Delta_{\xi}\phi + \frac{R_{\xi}}{2}\phi^2) d\mathcal{H}^{n-1} \geq \int_{\xi} (\frac{R_{M}}{2}\phi^2 + \frac{|A|^2}{2}\phi^2 + \frac{h^2}{2}\phi^2 - \langle\nabla_{M}h,\nu_{\xi}\rangle\phi^2) d\mathcal{H}^{n-1}.
\end{align}

From Equation \ref{hsquare} we know that $\frac{h^2}{2}\phi^2 - \langle\nabla_{N'_i}h,\eta_{\zeta_i}\rangle\phi^2> -\frac{\epsilon}{2}\phi^2$. Also, since $R_{M}\geq \epsilon$, therefore $\frac{R_{M}}{2}\phi^2\geq \frac{\epsilon}{2}\phi^2$. Thus we get 
\begin{align}
    \frac{R_{M}}{2}\phi^2 + \frac{h^2}{2}\phi^2 - \langle\nabla_{M}h,\nu_{\xi}\rangle\phi^2 > 0.
\end{align}
Substituting into Equation \ref{stability2}, we get
\begin{align}
    \int_{\xi} (-\phi \Delta_{\xi}\phi + \frac{R_{\xi}}{2}\phi^2) d\mathcal{H}^{n-1} > \int_{\xi} \frac{|A|^2}{2}\phi^2 d\mathcal{H}^{n-1}\geq 0.
\end{align}

Now we can follow Schoen and Yau in \cite{SY} to conclude that the operator 
\begin{align}
\mathcal{L}(\phi)=\Delta_{\xi}\phi - \frac{n-3}{4(n-2)}R_{\xi}
\end{align}
has only positive eigenvalues. Denote the first eigenvalue of $\mathcal{L}$ by $\zeta_{1,\iota^*g}$, where $\iota^* g$ indicates that the Laplacian is with respect to the induced metric on $\xi$. Then $\zeta_{1,\iota^*g}>0$, and the corresponding eigenfunction $u$ is strictly positive. Under a conformal change of the metric on $\xi$ from $\iota^*g$ to $u^{\frac{4}{n-3}}\iota^*g$, the scalar curvature of $\xi$ changes to $\frac{4(n-2)}{n-3}u^{-\frac{4}{n-3}-1}\zeta_{1,\iota^*g} u>0$. Therefore, there exists a positive function $u$ such that under the conformal change $u^{\frac{4}{n-3}}\iota^*g$, the metric on the hypersurface $\xi$ becomes one of positive scalar curvature. Thus, if it is possible to choose a function $h$ that satisfies Equation \ref{hsquare} for a positive lower bound $\epsilon$ on the scalar curvature of $M$, then the $\mu$-bubble hypersurface corresponding to that choice of $h$ admits a metric of positive scalar curvature conformal to the original induced metric.

\section{A construction of uniformly positive scalar curvature metric}\label{sectionjoiningpieces}

We begin by deriving a condition on the decomposition of an open manifold such that it ensures the existence of a uniformly positive scalar curvature metric. Then, we shall look at certain situations in which our derived condition is satisfied. 

Let $M$ be an open orientable connected manifold of dimension $n\geq 3$ with a complete Riemannian metric $g$ of positive scalar curvature. If there exists a compact exhaustion $\{U_i\}_{i=0}^\infty$ of $M$ such that for each $i$ there is some neighbourhood of $\partial U_i$ diffeomorphic to $\partial U_i\times [0,1]$ where $g$ is isometric to the product metric $g=\iota_i^*g + dt^2$, then we say $M$ admits a \textit{positive scalar curvature metric with cylindrical necks}. Here $\iota_i^*g$ is the induced metric on $\partial U_i$ from $g$. In other words, we say that $M$ has a positive scalar curvature metric with cylindrical necks if there exists a compact exhaustion of $M$ such that some positive scalar curvature metric on $M$ is a product metric in some neighbourhood of the boundary of each element of the exhaustion.

Note that while $M$ has positive scalar curvature everywhere, the scalar curvature might decay to $0$, and therefore we do not a priori get a metric of uniformly positive scalar curvature. The following lemma shows that in the case of positive scalar curvature metric with cylindrical necks, we can indeed modify the given metric to get a metric of uniformly positive scalar curvature. 

\begin{lemma}\label{lemmajoiningpieces}
    Let $M$ be an open orientable manifold of dimension $n\geq 3$ admitting a complete Riemannian metric $g$ of positive scalar curvature with cylindrical necks. Then $M$ admits a metric of uniformly positive scalar curvature.
\end{lemma}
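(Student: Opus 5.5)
The plan is to cut $M$ along the cylindrical necks into compact pieces, rescale each piece by a constant so that its scalar curvature is at least $1$, and reglue the rescaled pieces using cylinders that interpolate between the mismatched rescaled boundary metrics. Set $U_{-1}=\emptyset$ and $W_i=U_i\setminus \mathring U_{i-1}$. Each $W_i$ is compact, carries a metric of positive scalar curvature, and near each boundary component is a product $\partial U_j\times[0,1]$ with $j\in\{i-1,i\}$.

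\emph{Step 1: rescaling.} By compactness, $\min_{W_i} R_g=\epsilon_i>0$. Put $\lambda_i=\min(1,\epsilon_i)$ and $g_i=\lambda_i g$ on $W_i$, so $R_{g_i}=R_g/\lambda_i\geq 1$. Since $\lambda_i\le1$, lengths only shrink, and the necks remain products $\lambda_i\iota_j^*g+ds^2$, now of length $\sqrt{\lambda_i}$. In particular $\iota_j^*g$ has $R\ge\epsilon_j$ and so has positive scalar curvature (take $\epsilon_i\le$ the minimum over the necks, where $R_g=R_{\iota^*g}$).

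\emph{Step 2: gluing.} Across $\partial U_i$ the two adjacent pieces induce the metrics $\lambda_i h$ and $\lambda_{i+1}h$, where $h=\iota_i^*g$ has positive scalar curvature. Between them insert a cylinder $\partial U_i\times[0,L_i]$ with metric $\mu(t)h+dt^2$, where $\mu$ is monotone, interpolates from $\lambda_i$ to $\lambda_{i+1}$, and is constant near the ends so everything stays smooth. For a warped metric $\mu(t)h+dt^2$ with $\mu=\phi^2$, one has
\[
R=\phi^{-2}R_h-2(n-1)\phi''/\phi-(n-1)(n-2)(\phi'/\phi)^2 .
\]
Since $\phi^{-2}R_h\ge \phi^{-2}\epsilon_i\ge$ a positive constant (because $\phi^2\le\max(\lambda_i,\lambda_{i+1})\le1$ and $R_h\geq\epsilon_i$ on the neck), choose $\phi$ to vary so slowly that $|\phi''|/\phi$ and $(\phi'/\phi)^2$ are less than half of that quantity. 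This is possible by taking $L_i$ large, because only the ratio $\log(\lambda_i/\lambda_{i+1})$ needs to be traversed and it is finite. Concretely, take $\log\phi$ to be a stretched smooth step of height $\tfrac12\log(\lambda_{i+1}/\lambda_i)$ and length $L_i$; then $\phi'/\phi$ and $\phi''/\phi$ are $O(1/L_i)$ and $O(1/L_i^2)$ times that height. The resulting lower bound on this cylinder is $\tfrac12\epsilon_i/\max(\lambda_i,\lambda_{i+1})\ge\tfrac12$ whenever $\epsilon_i\le1$, and is even better otherwise. So $R\ge 1/2$ on every cylinder.

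\emph{Step 3: completeness and conclusion.} The glued manifold is diffeomorphic to $M$, since inserting collars does not change the diffeomorphism type, and has $R\ge 1/2$ everywhere. For completeness, each inserted cylinder has length $L_i$; choosing $L_i\geq1$ makes any path to infinity cross infinitely many cylinders, so it has infinite length, and closed bounded sets are compact. The main obstacle is Step 2: ensuring that the interpolation between the two different scalings of the boundary metric keeps scalar curvature uniformly bounded below. This rests on the boundary metric having positive scalar curvature, which follows from the product structure of the neck, and on the fact that slow warping makes the error terms arbitrarily small.
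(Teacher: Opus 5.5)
Your strategy is the paper's: cut along the necks, rescale each compact piece by a constant, and reglue through long cylinders on which the scale of the boundary metric changes slowly. Your warped product $\mu(t)h+dt^2$ is the paper's conformal metric $f(t)\lambda_0(h+dt^2)$ after reparametrizing $t$. However, the key estimate in Step 2 is wrong as written.

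From $R_h\ge\epsilon_i$ and $\phi^2\le\max(\lambda_i,\lambda_{i+1})$ you only get $\phi^{-2}R_h\ge\epsilon_i/\max(\lambda_i,\lambda_{i+1})$. This is not $\ge 1$ once $\lambda_{i+1}>\epsilon_i$. For example, $\epsilon_i=10^{-2}$ and $\epsilon_{i+1}=\tfrac12$ give $\lambda_i=10^{-2}$, $\lambda_{i+1}=\tfrac12$, and a bound of $\tfrac1{50}$. The $\epsilon_i$ need not be monotone along the exhaustion, so $\epsilon_i/\lambda_{i+1}$ can tend to $0$. Your inequalities then give no uniform lower bound on the cylinders, which is the whole point of the lemma.

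The repair uses the other side of the neck. The hypersurface $\partial U_i$ lies in both $W_i$ and $W_{i+1}$, and $R_g=R_h$ along it because it sits in the product region. Hence $R_h\ge\max(\epsilon_i,\epsilon_{i+1})\ge\max(\lambda_i,\lambda_{i+1})\ge\phi^2$, so $\phi^{-2}R_h\ge 1$ on the whole inserted cylinder. This is exactly the paper's argument in another form. There, $\phi^{-2}R_h$ is monotone in $t$, and its endpoint values $R_h/\lambda_i$ and $R_h/\lambda_{i+1}$ are the scalar curvatures of $\lambda_i g$ and $\lambda_{i+1}g$ on the two sides of the neck, both at least the chosen lower bound.

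A smaller point: making $|\phi''|/\phi$ and $(\phi'/\phi)^2$ each less than half the main term does not make the total error small, because of the coefficients $2(n-1)$ and $(n-1)(n-2)$. Simply take $L_i$ large enough that the total error is at most $\tfrac12$.

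With these fixes the proof goes through. Your Step 3 also adds something the paper's proof does not address, namely completeness of the glued metric. Using a unit-speed $t$ on the cylinders with $L_i\ge 1$ forces every divergent path to have infinite length.
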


\begin{proof}
   We are given an exhaustion $\{U_i\}_{i=0}^\infty$. Define compact submanifolds $K_i$ of $M$ as $K_0=U_0$ and $K_i=U_{i}\setminus U_{i-1}$ for all $i>0$. So, $\partial K_0=\partial U_0$, and $\partial K_i = \partial U_i\cup \partial U_{i-1}$ for all $i>0$. For $i>0$, denote $\partial U_i$ as $\partial^+ K_i$ and $\partial U_{i-1}$ as $\partial^- K_i$ so that $\partial K_i = \partial^-K_i\cup \partial^+ K_i$. We denote the restriction of $g$ to $K_i$ by $g_i$.
   
   Each $K_i$ then has a metric $g_i$ of positive scalar curvature that is additionally a product metric in some neighbourhood of their boundary components. Let us fix some arbitrary constant $\epsilon>0$. For each $i$, we can choose constants $\lambda_i>0$ such that after scaling $g_i$ by $\lambda_i$, the scalar curvature of the new metric $\lambda_ig_i$ is at least $2\epsilon$. This is possible by the compactness of the $K_i$ and the positivity of the scalar curvature of $g_i$.

    We focus on the pieces $K_0$ and $K_1$. By definition, $g_0$ and $g_1$ are isometric in some neighbourhood of $\partial K_0$ and $\partial^- K_1$ respectively, and hence $K_0$ can be glued to $K_1$ along the boundary isometry to produce a smooth Riemannian metric. However, after scaling each $g_i$ by $\lambda_i$, this does not necessarily remain true, since we might have $\lambda_0\neq \lambda_1$. Therefore, we need to construct a metric on $K_0$ such that it agrees with the new metric on $\partial^-K_1$ in some neighbourhood of $\partial K_0$, while still maintaining control on the scalar curvature lower bounds. This would allow us to again glue $K_0$ to $K_1$ along their corresponding boundaries as before. 

    Since $\iota_0^*g$ on $\partial K_0$ and $\iota_1^*g$ on $\partial^- K_1$ are isometric, let us henceforth refer to the initial metric on $\partial K_0$ and $\partial^- K_1$ as $h$ after identification by this isometry. We need to modify the metric on $\partial K_0$ from $\lambda_0 h$ to $\lambda_1 h$. For any $T>0$, we can attach a cylinder $\partial K_0\times [0,T]$ along the boundary (identifying $\partial K_0\times\{0\}\subset \partial K_0\times [0,T]$ with $\partial K_0\subset K_0$ by the identity diffeomorphism) without changing the diffeomorphism type of $K_0$, since the attached cylinder will deformation retract to the original boundary. Consider the product metric $\overline{g}=\lambda_0(h+dt^2)$ on $\partial K_0\times[0,T]$. Since this is isometric to the new metric on $K_0$ in a neighbourhood of $\partial K_0$, the resulting metric on $K_0\cup \partial K_0\times[0,T]$ after attaching the cylinder is smooth.

    We choose a suitable constant $T$ and modify the initial product metric on $\partial K_0\times[0,T]$ so that in some neighbourhood of $\partial K_0\times\{0\}$ the metric is of the form $\lambda_0(h+dt^2)$ and in some neighbourhood of $\partial K_0\times \{T\}$ it is of the form $\lambda_1(h+dt^2)$. To this end, define a function $f:[0,T]\to\mathbb{R}^+$ as
    \begin{equation}\label{equationfunctionjoining}
        f(t)=1+(\frac{\lambda_1}{\lambda_0}-1)\frac{e^{-\frac{T}{t}}}{e^{-\frac{T}{t}}+e^{-\frac{T}{T-t}}}.
    \end{equation}
    Note that $f$ is a smooth function such that $f(0)=1$, $f(T)=\frac{\lambda_1}{\lambda_0}$, and all derivatives of $f$ vanish at $t=0$ and $t=T$. The maximum and minimum values of $f'$ and $f''$ attained on $[0,T]$ depend continuously on $T$, and as we increase $T$ both $\sup_{t\in [0,T]} \lvert f'(t)\rvert$ and $\sup_{t\in [0,T]} \lvert f''(t)\rvert$ decrease. See Figure \ref{figfunctionjoining} for the behaviour of such a function.

    Define the function $F:K_0\times[0,T]\to\mathbb{R}^+$ as 
    \begin{equation}
        F(x,t)=f(t)^{\frac{n-2}{4}},
    \end{equation}
    that is, $F$ is defined by composing $f$ with the projection map to the second coordinate on $K_0\times[0,T]$ raised to the index of $\frac{n-2}{4}$.

Recall that for a positive smooth function $u$ on a manifold $M$ of dimension $n$ with metric $g'$ and scalar curvature $R_{g'}$, the scalar curvature of the conformal metric $u^{\frac{4}{n-2}}g'$ is given by 
\begin{equation}
    R_{u^{\frac{4}{n-2}}g'}(x)=u^{-\frac{n+2}{n-2}}(uR_{g'} - \frac{4(n-1)}{n-2} \Delta u).
\end{equation}
With our choice of function $F$ on $K_0\times[0,T]$, the scalar curvature after conformal change to $F^{\frac{4}{n-2}}\overline{g}$ becomes
\begin{equation}\label{conformalscalareq}
    R_{F^{\frac{4}{n-2}}\overline{g}}=F^{-\frac{n+2}{n-2}}(FR_{\overline{g}} - \frac{4(n-1)}{n-2}(\Delta_{t}F+H_t\partial_{\nu_t}F+\partial^2_{\nu_t}F) ),
\end{equation}
where $\Delta_t$ is the Laplacian operator on the hypersurface $\partial K_0\times\{t\}$, $H_t$ is the mean curvature of this hypersurface, and $\nu_t$ is a continuous choice of unit normal vector field normal to this hypersurface. By definition, $F$ is constant on each $\partial K_0\times\{t\}$. Since the metric is a product, we also have $H_t=0$. Hence, Equation \ref{conformalscalareq} becomes,
\begin{equation}\label{conformalscalareqfinal}
    R_{F^{\frac{4}{n-2}}g}=F^{-\frac{n+2}{n-2}}(FR_{\overline{g}} - \frac{4(n-1)}{n-2}\partial^2_{\nu_t}F).
\end{equation}

\begin{figure}
    \centering
    \includegraphics[width=0.5\linewidth]{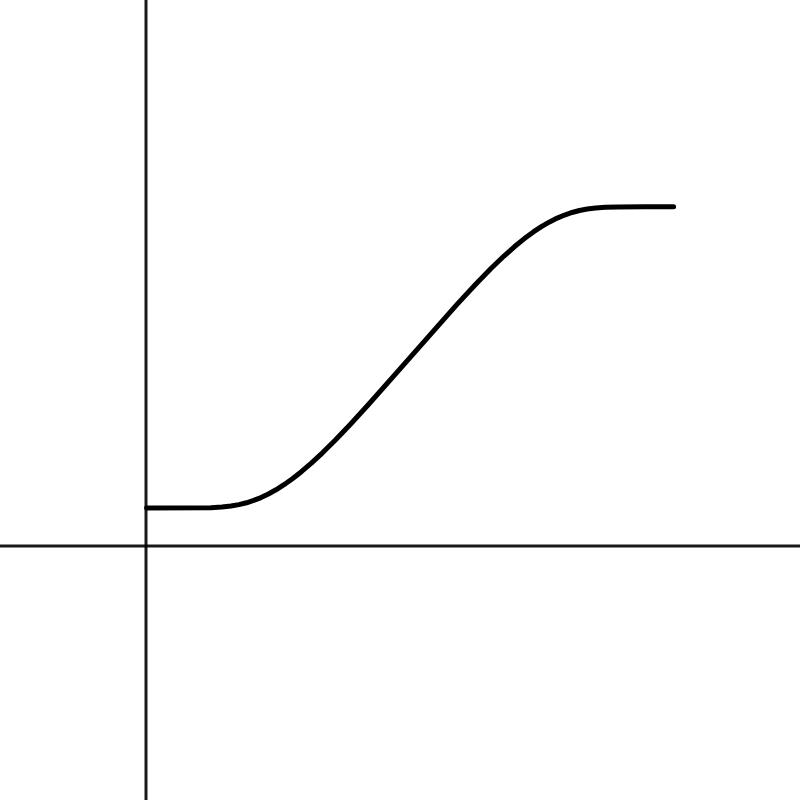}
    \caption{The function $f$ in Equation \ref{equationfunctionjoining}}
    \label{figfunctionjoining}
\end{figure}

\begin{figure}[htbp]
    \centering
\begin{tikzpicture}
\draw (0,0) -- (4,0);
\draw (0,3) -- (4,3);
\draw (4,1.5) ellipse (0.5cm and 1.5 cm );
\draw (11,-2) -- (15,-2);
\draw (11,5) -- (15,5);
\draw (11,1.5) ellipse (0.7cm and 3.5 cm );
\draw[dashed] (4,3) .. controls (7,3) and (8,5) .. (11,5);
\draw[dashed] (4,0) .. controls (7,0) and (8,-2) .. (11,-2);
\node at (4,1.5) {$\partial K_0$};
\node at (11,1.5) {$\partial^- K_1$};
\node at (1.5,1.5) {$K_0$};
\node at (13,1.5) {$K_1$};
\node at (7,1.5) {$\partial K_0\times [0,T]$};
\end{tikzpicture}
\caption{Joining $K_0$ to $K_1$ through $F$}
    \label{figtikzdiagram}
\end{figure}

The first term is $F^{-\frac{4}{n-2}}R_{\overline{g}}$. At $t=0$, $F=1$, and this term is just the scalar curvature of $\overline{g}$, which is the scalar curvature of $\lambda_0 g_0$ at $\partial K_0$. At $t=T$, $F^{-\frac{4}{n-2}}=\frac{\lambda_0}{\lambda_1}$, and therefore this term becomes $\frac{\lambda_0}{\lambda_1}R_{\overline{g}}$. This is exactly the scalar curvature of $\lambda_1 g_1$ at $\partial^- K_1$. By our choice of $\lambda_i$, we therefore get that at $t=0$ and $t=T$ the first term $F^{-\frac{4}{n-2}}R_{\overline{g}}$ is at least $2\epsilon$. Since $F$ varies continuously between $1$ and $(\frac{\lambda_1}{\lambda_0})^{\frac{n-2}{4}}$ and $R_{\overline{g}}$ does not vary with $t$, we can conclude that the first term in Equation \ref{conformalscalareqfinal} is always bounded below by $2\epsilon$. 

The second term of Equation \ref{conformalscalareqfinal} is $-F^{-\frac{n+2}{n-2}}\frac{4(n-1)}{n-2}\partial^2_{\nu_t}F$. Here, note that $F$ is bounded, and as stated earlier, the absolute value of $\partial^2_{\nu_t}F$ depends on our choice of $T$. Choose $T$ large enough to ensure that 
\begin{equation}
\sup_{t\in[0,T]}\lvert F^{-\frac{n+2}{n-2}}\frac{4(n-1)}{n-2}\partial^2_{\nu_t}F \rvert\leq\epsilon.
\end{equation}
Then, we have
\begin{align*}
    R_{F^{\frac{4}{n-2}}g}=&F^{-\frac{n+2}{n-2}}(FR_{\overline{g}} - \frac{4(n-1)}{n-2}\partial^2_{\nu_t}F)\\
    &\geq 2\epsilon - \epsilon\\
    &\geq \epsilon.
\end{align*}
Thus, for our choice of $T$ and the corresponding choice of metric on $\partial K_0\times [0,T]$, we get a smooth interpolation between a product metric of the form $\lambda_0(h+dt^2)$ near $\partial K_0\times\{0\}$ and a product metric of the form $\lambda_1(h+dt^2)$ near $\partial K_0\times\{1\}$, with the scalar curvature having value at least $\epsilon$ everywhere. We now attach this cylinder to $\partial K_0\subset K_0$ along $\partial K_0\times \{0\}$ by the identity isometry as mentioned earlier, and similarly attach the other end $\partial K_0\times \{T\}$ to $\partial^- K_1$ by the identity isometry (see Figure \ref{figtikzdiagram}).

We have thus constructed a metric on $K_0\cup K_1 = U_1$ such that the scalar curvature is bounded below by $\epsilon$. Repeating this process by constructing cylinders $\partial^+ K_i\times[0,T_i]$ for suitable choices of $T_i>0$ to ensure that the scalar curvature always remains bounded below by $\epsilon$ and then attaching the cylinders to the respective boundaries results in the construction of a metric on $M = \bigcup_{i=0}^\infty K_i$ such that the scalar curvature is everywhere greater than $\epsilon$. Thus, $M$ admits a metric of uniformly positive scalar curvature.

\end{proof}

\subsection{Uniformly positive scalar curvature metric on manifolds using Morse functions}

Having constructed a method to join pieces while retaining scalar curvature lower bounds, we now use the previous lemma to prove the existence of a uniformly positive scalar curvature metric in some classes of open manifolds. We first focus on manifolds that admit certain Morse functions. Let $M$ be an $n$ dimensional open orientable connected manifold, with $n\geq 3$, such that there is a proper Morse function on $M$ which is bounded below and has no critical points of index greater than $n-3$. We show that these manifolds always admit a metric of uniformly positive scalar curvature.

The proof relies on Gromov and Lawson's celebrated technique in \cite{GL} for the construction of a positive scalar curvature metric on manifolds obtained by a surgery of codimension at least $3$ on an ambient positive scalar curvature manifold. We shall particularly use the related result in Theorem $1.2$ by Walsh in \cite{MWalsh}, which we state here for convenience. Note that per Walsh, the term \textit{admissible Morse function} on an $n+1$ dimensional cobordism $W$ between $n$ dimensional closed manifolds $X_0$ and $X_1$ refers to a Morse function that takes constant values $0$ and $1$ on $X_0$ and $X_1$ respectively, and has no critical points of index greater than $n-2$. 

\begin{theorem}[Walsh, \cite{MWalsh}]\label{theoremwalsh}
    Let $\{W^{n+1};X_0,X_1\}$ be a smooth compact cobordism. Suppose $g_0$ is a metric of positive scalar curvature on $X_0$ and $f: W\to [0,1]$ is an admissible Morse function. Then there is a psc-metric $\overline{g}=\overline{g}(g_0,f)$ on $W$ which extends $g_0$ and has a product structure near the boundary.
\end{theorem}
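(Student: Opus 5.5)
The plan is to build $\overline{g}$ inductively, one critical point of $f$ at a time, using the Gromov--Lawson surgery construction \cite{GL} as the local model.

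First I would perturb $f$ (keeping it admissible and keeping its boundary values $0$ and $1$) so that its finitely many critical points $p_1,\dots,p_m$ have distinct critical values $c_1<\dots<c_m$. Finiteness holds because $W$ is compact. Then I choose regular values $0=a_0<c_1<a_1<c_2<\dots<c_m<a_m=1$. This cuts $W$ into elementary cobordisms $W_j=f^{-1}[a_{j-1},a_j]$, each containing exactly one critical point. Near each regular level, the gradient flow of $f$ (for any auxiliary metric) identifies a collar with $X_{a}\times[a-\delta,a+\delta]$. It therefore suffices to prove the following claim: given a psc metric $g_{j-1}$ on $X_{a_{j-1}}$, there is a psc metric on $W_j$ that equals $g_{j-1}+dt^2$ near the bottom and equals $g_j+dt^2$ near the top, for some psc metric $g_j$ on $X_{a_j}$. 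Successive pieces then glue smoothly along their product collars, exactly as in Lemma~\ref{lemmajoiningpieces}. Regions without critical points are diffeomorphic to $X\times I$ and simply receive the product metric.

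For the elementary step, suppose $p_j$ has index $k\le n-2$. Then $W_j$ is $X_{a_{j-1}}\times I$ with a $k$-handle $D^k\times D^{n+1-k}$ attached along an embedded $S^{k-1}\times D^{n+1-k}\subset X_{a_{j-1}}\times\{1\}$. Correspondingly, $X_{a_j}$ is obtained from $X_{a_{j-1}}$ by a surgery on an embedded $S^{k-1}$ whose normal disc $D^{n-k+1}$ has dimension at least $3$, so the surgery has codimension at least $3$. The construction proceeds in three stages.
\begin{enumerate}
\item On $X_{a_{j-1}}\times[0,L]$, starting from $g_{j-1}+dt^2$, I apply the Gromov--Lawson bending of a curve $\gamma$ in the $(r,t)$-plane inside a tubular neighbourhood of $S^{k-1}\times\{t\}$. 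This produces a psc metric on the cylinder which is unchanged away from the tube. Near the top, in a small tube $S^{k-1}\times D^{n-k+1}$, it becomes the standard metric $g_{S^{k-1}}(\rho)+g_{\mathrm{torp}}^{\,n-k+1}$. Positivity comes from the curvature $\approx (n-k)(n-k-1)/r^2>0$ of the small normal spheres $S^{n-k}$, and this is where $n-k+1\ge3$ is used.
\item On the handle $D^k\times D^{n-k+1}$ I put a psc metric that restricts to the same standard form on $S^{k-1}\times D^{n-k+1}$. A natural choice is a metric of the form $g_{\mathrm{torp}}^{\,k}+g_{\mathrm{torp}}^{\,n-k+1}$, suitably reshaped near the corners. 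The torpedo factor with $n-k+1\ge3$ supplies enough positive curvature to dominate the bending errors.
\item I smooth the resulting corners so that the metric near the new top boundary is again a product $g_j+dt^2$. The induced metric $g_j$ is the Gromov--Lawson psc metric on the surgered manifold.
\end{enumerate}

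The main obstacle is the second and third stages: making the handle metric and the corner-smoothing preserve positive scalar curvature while ending in an exact product collar on the new level set. My first attempt would be Walsh's device of realising the whole elementary cobordism as a hypersurface-type region inside $X\times\mathbb{R}$, via a one-parameter family of Gromov--Lawson metrics $g_s$ on the level sets. The isotopy/concordance lemma then says that a psc isotopy $g_s$ yields a psc metric $g_s+dt^2$ after stretching $t$ by a large factor, which is the same stretching argument that controls the $f''$ term in Lemma~\ref{lemmajoiningpieces}. This turns the corner problem into a slow-variation estimate on a product, and induction over $j=1,\dots,m$ gives $\overline{g}$.
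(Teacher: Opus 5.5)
The paper does not prove this statement. It is quoted from \cite{MWalsh} and used as a black box in the proof of Theorem~\ref{theoremmorsepsc}, so the benchmark is Walsh's memoir. Your architecture agrees with it. You cut $f$ at regular values into elementary cobordisms. You isotope $g_{j-1}$ through psc metrics to a standard form $ds^2_{k-1}+g^{n-k+1}_{\mathrm{tor}}(\delta)$ on a tube $N$ about the attaching sphere; this needs a continuous family of Gromov--Lawson bendings, since a single bending only yields a psc hypersurface $M_\gamma\cong X$, not a metric on the cylinder. You then stretch this isotopy into a psc concordance on a collar, put a metric on the handle, and glue along exact product collars. Perturbing $f$ to separate the critical values is harmless for existence.

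The genuine gap is at the step you flag as the main obstacle, and the mechanism you propose for it cannot work. The isotopy-implies-concordance lemma produces metrics only on a product $X\times I$, from a path of psc metrics on a fixed manifold. Across the critical level the level sets of $f$ change diffeomorphism type ($X_{a_j}$ is the surgered manifold), so there is no family $g_s$ ``on the level sets'' to stretch. Moreover, $W_j$ is in general not a region of $X\times\mathbb{R}$ at all, since the core $D^k$ would make the attaching sphere null-homotopic in $X$. The region cut out by $M_\gamma$ is diffeomorphic to $X\times I$, not to $W_j$. Consequently your stages 2 and 3 (``$g^k_{\mathrm{tor}}+g^{n-k+1}_{\mathrm{tor}}$, suitably reshaped near the corners'' and ``smooth the corners'') are asserted rather than proved, and they are exactly the part of the theorem that goes beyond \cite{GL}.

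What is missing is an explicit local model on the handle. For example, $ds^2_{k-1}+dt^2+g^{n-k+1}_{\mathrm{tor}}(\delta)$ on $S^{k-1}\times[0,1]\times D^{n-k+1}$ is precisely the neck of $g^{k}_{\mathrm{tor}}(1)+g^{n-k+1}_{\mathrm{tor}}(\delta)$ on $D^k\times D^{n-k+1}$. So the handle attaches smoothly with psc, and near the new boundary $D^k\times S^{n-k}$ the metric is already a product. The only defect is the corner along $S^{k-1}\times S^{n-k}$, where the old top $(X\setminus N)\times\{1\}$ meets $D^k\times S^{n-k}$.

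To handle it, attach along a tube slightly smaller than the standard-form one. Then both torpedoes are cylindrical near the corner, so the metric there is the Riemannian product of $(S^{k-1}\times S^{n-k},\,ds^2_{k-1}+\delta^2 ds^2_{n-k})$ with a flat planar region. Rounding the corner and installing an exact product collar becomes a two-dimensional problem whose Gaussian curvature is bounded independently of $\delta$. Meanwhile the $S^{n-k}$ factor contributes $(n-k)(n-k-1)\delta^{-2}>0$, using $n-k\geq 2$ again. For $\delta$ small the result is psc, with a product collar at the top and top metric of Gromov--Lawson type: $g_{\mathrm{std}}$ away from the tube and $g^{k}_{\mathrm{tor}}(1)+\delta^2ds^2_{n-k}$ on the new part. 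Without an argument of this kind, your induction has no elementary step.
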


Let us now prove Theorem \ref{theoremmorsepsc}.

\begin{proof}
    We are given a Morse function $f$ on $M$ which is bounded below, and hence has a global minimum. By a small modification of $f$, we can ensure that the global minimum is achieved at a single point $x\in M$. Let the global minimum be $a_0$. Then there exists some small $\delta>0$ such that $A=f^{-1}[a_0,a_0+\delta]$ is connected, and is diffeomorphic to an $n$ dimensional ball with boundary as the level set $f^{-1}(a_0+\delta)$, which is diffeomorphic to an $n-1$ dimensional sphere. Put a metric of positive scalar curvature on $A$ such that it is a product metric near the boundary. Since $A$ is diffeomorphic to a ball, such metrics exist. For instance, we can take the torpedo metric defined in \cite{MWalsh} where the metric on the boundary is the standard metric on the sphere.

    Now, consider the manifold $M\setminus A$ with the boundary sphere $f^{-1}(a_0+\delta)$. Choose a value $a_1>a_0+\delta$ and consider a level set $f^{-1}(a_1)$. The preimage $f^{-1}[a_0+\delta,a_1]$ is a submanifold with boundaries as the level sets $f^{-1}(a_0+\delta)$ and $f^{-1}(a_1)$. By reparametrising the function $f$ in this submanifold, we get a Morse function $h:f^{-1}[a_0+\delta,a_1]\to [0,1]$ such that $h(f^{-1}(a_0+\delta))=0$, $h(f^{-1}(a_1))=1$. Additionally, $h$ has critical points of the same indices as $f$, and hence all critical points of $h$ are of index $\leq n-3$. On $f^{-1}(a_0+\delta)$, we have a metric of positive scalar curvature, which is precisely the chosen metric on the sphere $\partial A$. We are now in a situation to apply Theorem \ref{theoremwalsh} to this submanifold with the function $h$. We get a metric of positive scalar curvature on $f^{-1}[a_0+\delta,a_1]$ such that it is a product metric near the boundaries. Also, since $\partial A$ and $f^{-1}(a_0+\delta)$ are now isometric by our construction, and both have product metric neighbourhoods, we can glue the submanifolds $A$ and $f^{-1}[a_0+\delta,a_1]$ by this isometry to obtain a smooth Riemannian metric on the union $f^{-1}[a_0,a_1]$.

    Choosing some $a_2>a_1$ and considering the submanifold $f^{-1}[a_1,a_2]$ with the positive scalar curvature metric on $f^{-1}(a_1)$ obtained from the previous step, we again obtain a positive scalar curvature metric with product boundary after another application of Theorem \ref{theoremwalsh}. Gluing along the boundaries as before and repeating the procedure, we get a smooth Riemannian metric on $M$ of positive scalar curvature. Additionally, the metric has cylindrical necks by construction. Hence, we can now apply Lemma \ref{lemmajoiningpieces} to upgrade to a metric with uniformly positive scalar curvature. This completes the proof.
\end{proof}

\begin{remark}
    Note that Theorem \ref{theoremmorsepsc} does not require any assumption on the scalar curvature of any initial metric on the manifold. The assumptions are entirely topological, given which, we construct the uniformly positive scalar curvature metric. The existence of a positive scalar curvature metric on such manifolds is essentially a corollary of Theorem \ref{theoremwalsh} by Walsh.
\end{remark}

\subsection{Uniformly positive scalar curvature metric in manifolds with minimal exhaustions}

Let $M$ be an open manifold of dimension $3\leq n\leq 7$. Given a complete Riemannian metric of positive scalar curvature on $M$, we show that if there exists an exhaustion of $M$ with minimal hypersurface boundaries, then the metric can be upgraded to produce a metric of uniformly positive scalar curvature on $M$.

We shall crucially use Corollary 5.1 from Rosenberg, Ruberman and Xu in \cite{ConformalXu}. Before stating the corollary, let us define the terminologies used. For a metric $g$ on an $n$ dimensional manifold $M'$, taking $a=\frac{4(n-1)}{n-2}$, the conformal Laplacian $\mathcal{L}$ is defined as
\begin{equation}
    \mathcal{L}(u) \coloneq -a\Delta u + R_g u,
\end{equation}
for any smooth function $u$ on $M'$.

Let $M'$ have non-empty boundary $\partial M'$. We follow the notations in \cite{ConformalXu} and denote by $\eta_{1,g}$ the first eigenvalue of the conformal Laplacian on $M'$ with respect to a Robin type boundary condition given by 
\begin{equation}\label{equationboundaryconstraint}
    \frac{\partial u}{\partial \nu} + \frac{2}{p-2}Hu = 0,
\end{equation}
where $\nu$ is the unit inward normal vector field on $\partial M'$, $H$ is the mean curvature of $\partial M'$ with respect to $\nu$, and $p=\frac{2n}{n-2}$. When $H=0$ and $R_g>0$, it is well known that $\eta_{1,g}>0$. Recall from Section \ref{sectionprelim} that $\zeta_{1,\iota^*g}$ is the first eigenvalue of the conformal Laplacian on the boundary submanifold with respect to the induced metric $\iota^*g$.

Corollary $5.1$ in \cite{ConformalXu} states the following.
\begin{corollary}[Rosenberg, Ruberman, Xu, \cite{ConformalXu}]\label{psc-conformal}
    There exists a metric $g$ on $M$ with $\eta_{1,g}>0$, $\zeta_{1,\iota^*g}>0$ on $M$ and $\partial M$ iff there exists a psc metric $k$ on $\partial M$ which extends to a psc metric $k'$ on $M$ which is a product near $\partial M$.
\end{corollary}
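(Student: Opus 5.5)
The statement is an equivalence, and I would prove the two directions separately. The easy direction is from right to left. Suppose $k'$ is a psc metric on $M$ that is a product near $\partial M$ and restricts to the psc metric $k$ on $\partial M$. Then $\partial M$ is totally geodesic, so $H=0$ and the Robin condition \eqref{equationboundaryconstraint} reduces to the Neumann condition. The Rayleigh quotient for $\eta_{1,k'}$ is then $\int_M (a|\nabla u|^2 + R_{k'}u^2)/\int_M u^2$, which is bounded below by $\min R_{k'}>0$ by compactness. Similarly $\zeta_{1,k}\geq \frac{n-3}{4(n-2)}\min R_k>0$, since the conformal Laplacian of $\partial M$ has a strictly positive potential term.

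For the converse I would proceed in three steps.

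\textbf{Step 1 (conformal change on $M$).} Take a positive first eigenfunction $u$ of $\mathcal{L}$ with the boundary condition \eqref{equationboundaryconstraint}, and set $g_1=u^{\frac{4}{n-2}}g$. The interior conformal formula gives $R_{g_1}=\eta_{1,g}u^{-\frac{4}{n-2}}>0$. The boundary transformation law for mean curvature, $H_{g_1}=u^{-\frac{2}{n-2}}\bigl(H+\frac{2(n-1)}{n-2}\cdot\frac{\partial_\nu u}{u}\bigr)$ up to normalising constants, shows that the Robin condition is exactly the condition $H_{g_1}=0$. So $g_1$ has $R>0$ and minimal boundary.

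\textbf{Step 2 (boundary condition).} The induced metric $\iota^*g_1$ is conformal to $\iota^*g$. The sign of $\zeta_1$ is a conformal invariant, so $\zeta_{1,\iota^*g_1}>0$. As in Section~\ref{sectionprelim}, the first eigenfunction $v$ then gives a psc metric $k=v^{\frac{4}{n-3}}\iota^*g_1$ on $\partial M$.

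\textbf{Step 3 (realise $k$ on the boundary).} I would extend $v$ to a positive function $w$ on $M$, equal to a suitable power of $v$ on $\partial M$, so that $w^{\frac{4}{n-2}}g_1$ restricts to $k$. The freedom is the normal derivative of $w$ and $w$ away from the boundary. I would choose $\partial_\nu w\ge 0$ large, so that the new boundary mean curvature stays nonnegative, and choose $w$ to be $C^2$-close to a constant on most of a thin collar. Near the boundary, scalar curvature is kept positive by taking the collar short while controlling $\Delta w$ through the ODE in the normal variable, in the spirit of the cylinder interpolation in Lemma~\ref{lemmajoiningpieces}. This yields a metric $g_2$ with $R_{g_2}>0$, $H_{g_2}\ge 0$ and $\iota^*g_2=k$ psc.

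Finally, I would invoke the standard deformation result (Gromov--Lawson, and Bär--Hanke in the form used for boundaries) that a psc metric with mean convex boundary can be deformed, rel the boundary metric, to a psc metric that is a product near the boundary. The hypotheses $H\ge 0$ and $R_k>0$ are exactly what make the bending collar $ds^2+\phi(s)^2k$-type construction keep positive scalar curvature. This gives $k'$.

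The main obstacle I expect is Step 3. Arbitrary conformal factors can destroy either $R>0$ or $H\ge 0$ near the boundary. If the direct construction fails, my fallback would be to attach an external collar $\partial M\times[0,L]$ carrying a metric $dt^2+\gamma_t$ that interpolates conformally from $\iota^*g_1$ to $k$. The boundary term $H_{g_1}=0$ would be used to match the second fundamental forms at the junction, and $L$ would be taken large so that the $t$-derivatives are small compared with the positive contribution. The difficulty here is that intermediate metrics $\gamma_t$ need not be psc. To handle this, I would warp the collar with a strongly convex factor, so that the second fundamental form term dominates along it.
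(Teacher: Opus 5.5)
The paper gives no proof of this statement; it is quoted from Rosenberg--Ruberman--Xu, so your argument has to stand on its own. The right-to-left direction is fine for $n\geq 4$; when $n=3$ the coefficient $\frac{n-3}{4(n-2)}$ vanishes and your bound only gives $\zeta_1\geq 0$. Steps 1 and 2 are correct in substance. The gap is Step 3, and it is a sign problem, not a missing detail.

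In the paper's convention (inward $\nu$, $H>0$ for mean convex) the transformation law is $H_{w^{4/(n-2)}g_1}=w^{-\frac{2}{n-2}}\bigl(H_{g_1}-\frac{2(n-1)}{n-2}\frac{\partial_\nu w}{w}\bigr)$. Your formula, like the Robin condition as typed in the paper, carries the sign that is correct for the outward normal. Geometrically, raising $H$ conformally requires $w$ to be larger on $\partial M$ than just inside. As a check, the stereographic factor $\frac{2}{1+|x|^2}$ decreases towards $|x|=1$ and turns the mean convex unit ball into a hemisphere with minimal boundary.

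So, starting from $H_{g_1}=0$, let $t$ be the distance to $\partial M$. If $w$ falls off steeply into $M$ and is to be nearly constant after a collar of width $\delta$, then somewhere $\partial_t^2 w\gtrsim|\partial_\nu w|/\delta>0$, and there $-a\Delta w+R_{g_1}w<0$. A short collar makes this worse; Lemma~\ref{lemmajoiningpieces} works precisely because long collars make derivatives small. With a small slope you must still absorb the term $-a\Delta_{\partial M}\psi$ coming from the prescribed trace $\psi$ at $\partial M$. That forces a concave normal profile which later has to be undone by a convex one, and nothing in your sketch controls this.

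The fallback points the wrong way too. For $dt^2+\gamma_t$ one has $R=R_{\gamma_t}-2\partial_tE-E^2-|A|^2$ with $E=\frac12\operatorname{tr}\partial_t\gamma_t$. So $|A|^2$ enters negatively, a convex warping factor contributes $-2(n-1)\phi''/\phi<0$, and the slices next to $\partial M$ still carry the non-psc metric $\iota^*g_1$.

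The missing idea is that, while you change the boundary metric, you should preserve the conformally invariant sign of $\eta_1$, not pointwise positivity of $R$. One way to close the gap:
\begin{itemize}
\item Since $w|_{\partial M}$ and $\partial_\nu w$ can be prescribed independently, choose $g'$ conformal to $g$ with $\iota^*g'=k$ and $H_{g'}=0$, where $k$ is a psc metric in the conformal class of $\iota^*g$ (available from $\zeta_1>0$). Then $\eta_{1,g'}>0$.
\item Write $g'=dt^2+h_t$ near $\partial M$ and replace it by $dt^2+k+\chi(t/\epsilon)(h_t-k)$. Its scalar curvature stays bounded independently of $\epsilon$, because the $\epsilon^{-2}$ and $\epsilon^{-1}$ terms multiply $\operatorname{tr}(h_t-k)=O(t^2)$ and $\operatorname{tr}\partial_t h_t=O(t)$; this is where $H_{g'}=0$ is used. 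Hence $\eta_1$ stays positive for small $\epsilon$.
\item Attach $C=\partial M\times[0,\ell]$ with $dt^2+k$ and call the result $G$. For $\phi$ vanishing at $t=\ell$, $\int(a|\nabla\phi|^2+R_G\phi^2)\geq\eta_1\int_M\phi^2+\min R_k\int_C\phi^2$, so the form is coercive.
\item Solve $-a\Delta_G u+R_Gu=f$ with $f>0$, $f=R_k$ on $C$, and $u=1$ at $t=\ell$. Then $u>0$ and $u^{4/(n-2)}G$ has positive scalar curvature.
\item On $C$, $u-1$ solves the homogeneous equation and vanishes at $t=\ell$, so it decays exponentially along $C$ with bounds independent of $\ell$. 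A cutoff in the middle of $C$ makes the metric exactly $dt^2+k$ while keeping $R>0$, and truncating there gives $k'$.
\end{itemize}
This bypasses your Steps 1 and 4 entirely. It also lets you take $k=v_0^{4/(n-3)}\iota^*g$, with $v_0$ the first eigenfunction for $\iota^*g$ itself, so $k$ depends only on $\iota^*g$. That is what the gluing in the proof of Theorem~\ref{theorempscexhaustion} actually needs. Your $k$, built from $\iota^*g_1$, depends on the interior Robin eigenfunction and would not match across adjacent pieces.
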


Let us proceed to prove Theorem \ref{theorempscexhaustion}.

\begin{proof}
    We are given an exhaustion $\{U_i\}_{i=0}^\infty$ such that each $U_i$ is a compact submanifold of $M$ with minimal boundary. A priory, we have no information on the stability of $\partial U_i$. However, if for infinitely many $i$ we have that $\partial U_i$ is a stable minimal hypersurface, then by discarding the others we can get a new exhaustion $\{U_j\}_{j=1}^\infty$ such that each $U_j$ has a stable minimal hypersurface as its boundary. Otherwise, consider the submanifold $U_{i+1}\setminus U_i$. This is an $n$ dimensional manifold with boundaries $\partial U_{i}$ and $\partial U_{i+1}$, which are both minimal. Since $3\leq n\leq 7$, we can find a smooth stable minimal hypersurface $A_i$ separating the two boundaries (for instance, via the $\mu$-bubble method or by minimising area). Now, consider the compact submanifold $V_i$ of $M$ that has boundary $A_i$. By construction, $V_i$ lies in the interior of $U_{i+1}$, and therefore we can replace $U_i$ with $V_i$. Repeating for each $i$, we get an exhaustion $\{V_i\}$ with stable minimal boundary. Therefore, we can assume that each $\partial U_i$ is a stable minimal hypersurface without loss of generality. 

    Let us now consider $\partial U_i$ with the restricted metric. By our assumption, $\partial U_i$ is stable. Therefore, for any smooth function $u$ on $\partial U_i$, the stability inequality is satisfied, and we have
    \begin{equation}
        \int_{\partial U_i} (\lvert \nabla u \rvert^2 - \frac{1}{2}(R_M - R_{\partial U_i} + H^2 + \lvert A \rvert^2)u^2)d\mathcal{H}^{n-1} \geq 0.
    \end{equation}
Here $u$ is any smooth function on $\partial U_i$, $R_M$ and $R_{\partial U_i}$ are the ambient scalar curvature and the induced scalar curvature respectively, $H$ is the mean curvature of $\partial U_i$, $A$ is its second fundamental form, and the integration on $\partial U_i$ is with respect to the measure $\mathcal{H}^{n-1}$. Since $\partial U_i$ is minimal, and $R_M>0$, we have
    \begin{equation}
        \int_{\partial U_i} (\lvert \nabla u \rvert^2 + \frac{1}{2} R_{\partial U_i} u^2)d\mathcal{H}^{n-1} \geq 0.
    \end{equation}
    
    We can now follow the method discussed in Section \ref{sectionprelim} to conclude that the first eigenvalue $\zeta_{1,\iota^* g}$ of the conformal Laplacian for $\partial U_i$ is positive.  
    In other words, there exists a conformal metric $g_i$ on $\partial U_i$ such that $g_i$ has positive scalar curvature.

    Now, note that $M=U_0 \bigcup_{i=0}^\infty (U_{i}\setminus U_{i-1})$. Let us denote $K_i = U_{i}\setminus U_{i-1}$ for $i>0$, with $\partial^+ K_i=\partial U_{i}$ and $\partial^- K_i = \partial U_{i-1}$. As before, we say $K_0=U_0$. Then $K_i$ is a submanifold of $M$ with minimal stable boundary and a positive scalar curvature metric. As discussed above, $\zeta_{1,\iota^*g}>0$ on $\partial K_i$. Also, by the minimality of $\partial K_i$ and the positivity of $R_g$, we know that $\eta_{1,g}>0$. Therefore, we can apply Corollary \ref{psc-conformal} to conclude that there exists a positive scalar curvature metric $k_i$ on $K_i$ which is a product metric in some neighbourhood of $\partial K_i$. In addition, since $\partial^+ K_i = \partial^- K_{i+1}$ in the sense of being identically isometric, the new metric $\iota^* k_i$ on $\partial^+ K_i$ is isometric to $\iota^* k_{i+1}$ on $\partial^- K_{i+1}$. This follows from the proof of Corollary $5.1$ in \cite{ConformalXu}, since the new positive scalar curvature metric on the boundary is obtained by conformal change of the original metric using the first eigenfunction of the conformal Laplacian, which is identical for $\partial^+ K_i$ and $\partial^- K_{i+1}$.

    Thus, we have constructed positive scalar curvature metrics on each $K_i$ such that they are product metrics near the boundary, and the metrics agree on the corresponding boundaries. Thus, for each $i\geq0$ we can glue $\partial^+ K_i$ to $\partial^- K_{i+1}$, and the resulting metric will be smooth, and will be a product metric near the glued region. Hence, we get a smooth Riemannian metric on $M$ of positive scalar curvature with cylindrical necks. Now we can apply Lemma \ref{lemmajoiningpieces} to complete the construction of a uniformly positive scalar curvature metric on $M$.
\end{proof}

\section{Constructing positive scalar curvature metrics on interiors of compact manifolds}\label{sectionproductend}

Our next construction is that of a uniformly positive scalar curvature metric in those open manifolds that are interiors of compact manifolds with boundaries. The results in this section can be considered to be generalisations in principle of Chen's results in \cite{Chen}.

If $N$ is a compact manifold with boundary, its interior $\mathring{N}$ is known to have product ends. We say an open manifold $\mathring{N}$ of dimension $n\geq 2$ has \textit{product ends} if there exists an $n$ dimensional compact subset $U\subset \mathring{N}$ and a closed $n-1$ dimensional manifold $Z$ such that $\mathring{N}\setminus U$ is is diffeomorphic to $Z\times [0,\infty)$. Such manifolds are also sometimes called manifolds with tame ends in the literature. Notice that by our definition, a manifold with product ends always has finitely many ends.

We can now prove Theorem \ref{theoremproductupsc}, which establishes certain conditions for the existence of a uniformly positive scalar curvature metric in a manifold $M$ with product ends of dimension $3\leq n\leq 7$.

\begin{proof}
    Fix a point $p\in M$. Since by assumption $M$ has quadratic curvature decay, there exists some $R_0>0$ such that outside the ball $B(p,R_0)$ the scalar curvature of $M$ decays at the rate $\frac{C}{r_x^2}$ where $r_x$ is the distance function at a point $x\in M$ from $p$. Since $M$ has product ends, we can choose a submanifold $U$ satisfying $B(p,R_0)\subset U\subset M$ such that $M\setminus U$ is diffeomorphic to a product $N\times [0,\infty)$ for some closed manifold $N$. Choose $R_1>0$ such that $U\subset B(p,R_1)$ and $\partial B(p,R_1)$ is homologous to $\partial U$. Define $R= 2R_1$. We shall now show that for any $r\geq R$, if $B(p,r)$ has a mean convex boundary, then $M$ admits a metric of uniformly positive scalar curvature.

    Consider the submanifold $X=B(p,r)\setminus B(p,\frac{r}{2})$. We write $\partial X = \partial^+ X \cup \partial^- X $, where $\partial^+ X=\partial B(p,\frac{r}{2})$ and $\partial^- X= \partial B(p,r)$, and we denote their respective mean curvatures with respect to the unit normal vector field pointing inwards into $X$ by $H_{\partial^+ X}$ and $H_{\partial^- X}$ respectively. Our next step involves the construction of a $\mu$-bubble hypersurface in $X$. Recall (see Section \ref{sectionprelim}) that we need to choose a smooth function $h:X\to \mathbb{R}$ that satisfies certain constraints on $\partial X$ depending on the mean curvature of the boundary. In particular, we require $H_{\partial^+X}>h$ on $\partial ^+ X$, and $H_{\partial^-X}>-h$ on $\partial^ - X$. By our assumption, we have $H_{\partial^- X}\geq 0$, and therefore we can choose $h$ such that $h=0$ on $\partial^- X$ and $h\rightarrow -\infty$ at $\partial^+ X$.

\begin{figure}
    \centering

\tikzset{every picture/.style={line width=0.75pt}}

\begin{tikzpicture}[x=0.75pt,y=0.75pt,yscale=-1,xscale=1]

\draw    (88.53,101.9) .. controls (108.79,-21.6) and (167.32,117.1) .. (227.35,72.6) ;
\draw    (227.35,72.6) .. controls (287.37,28.09) and (250.61,161.61) .. (310.63,117.1) ;
 
\draw    (96.79,217.98) .. controls (156.81,173.48) and (75.03,149.74) .. (88.53,101.9) ;
 
\draw    (96.79,217.98) .. controls (73.53,234.67) and (167.32,223.18) .. (164.32,252.1) .. controls (161.32,281.03) and (275.37,244.69) .. (246.85,217.98) ;

\draw    (246.85,217.98) .. controls (208.59,147.52) and (291.87,188.31) .. (324.89,192.02) ;

\draw    (324.89,192.02) -- (548.49,242.46) ;

\draw    (310.63,117.1) -- (505.72,14) ;
 
\draw    (339.9,143.07) -- (523.73,43.67) ;

\draw    (363.16,157.9) -- (577,209.08) ;
 
\draw    (363.16,157.9) .. controls (342.15,152.71) and (313.63,157.16) .. (339.9,143.07) ;
 
\draw    (321.14,111.17) -- (339.9,143.07) ;

\draw    (354.9,198.7) -- (363.16,157.9) ;

\draw  [dash pattern={on 0.75pt off 0.75pt}]  (361.65,91.51) .. controls (363.74,90.18) and (365.41,90.51) .. (366.64,92.48) .. controls (367.85,94.56) and (369.41,95.09) .. (371.34,94.06) .. controls (373.77,93.58) and (375.04,94.64) .. (375.13,97.25) .. controls (373.84,98.88) and (373.91,100.49) .. (375.35,102.08) .. controls (376.42,104.22) and (375.87,105.84) .. (373.7,106.94) .. controls (371.62,107.98) and (371.22,109.55) .. (372.49,111.64) .. controls (374.12,113.16) and (374.3,114.8) .. (373.01,116.56) .. controls (372.39,118.96) and (373.33,120.36) .. (375.82,120.77) .. controls (378.07,120.52) and (379.35,121.53) .. (379.66,123.78) -- (379.66,123.78) ;

\draw  [dash pattern={on 0.75pt off 0.75pt}]  (404.42,166.8) .. controls (406.31,168.4) and (406.49,170.1) .. (404.95,171.89) .. controls (403.28,173.5) and (403.21,175.19) .. (404.74,176.94) .. controls (406.11,178.82) and (405.77,180.42) .. (403.71,181.74) .. controls (401.59,182.78) and (401,184.35) .. (401.94,186.45) .. controls (402.83,188.62) and (402.22,190.15) .. (400.11,191.02) .. controls (398.03,192.36) and (397.72,194.01) .. (399.18,195.97) .. controls (400.93,197.42) and (401.14,199.04) .. (399.82,200.85) .. controls (398.79,203.02) and (399.38,204.59) .. (401.61,205.54) .. controls (403.85,206.23) and (404.66,207.7) .. (404.03,209.93) -- (404.42,210.57) ;

\draw  [dash pattern={on 0.75pt off 0.75pt}]  (484.71,25.13) .. controls (486.93,24.06) and (488.54,24.6) .. (489.55,26.77) .. controls (490.33,28.98) and (491.81,29.75) .. (493.99,29.09) .. controls (496.39,28.77) and (497.67,29.79) .. (497.83,32.14) .. controls (497.66,34.42) and (498.73,35.73) .. (501.03,36.06) .. controls (503.34,36.76) and (504.08,38.22) .. (503.24,40.45) .. controls (502.16,42.5) and (502.61,44.11) .. (504.58,45.27) .. controls (506.47,46.79) and (506.65,48.45) .. (505.1,50.24) -- (504.97,54.8) ;

\draw  [dash pattern={on 0.75pt off 0.75pt}]  (550.74,202.41) .. controls (552.9,203.48) and (553.43,205.07) .. (552.33,207.18) .. controls (551.1,209.23) and (551.44,210.87) .. (553.34,212.11) .. controls (555.17,213.71) and (555.27,215.4) .. (553.64,217.19) .. controls (551.89,218.6) and (551.74,220.22) .. (553.18,222.06) .. controls (554.43,224.17) and (554.01,225.79) .. (551.94,226.93) .. controls (549.81,227.86) and (549.16,229.42) .. (549.99,231.63) .. controls (550.67,233.89) and (549.85,235.33) .. (547.52,235.95) .. controls (545.27,236.3) and (544.33,237.63) .. (544.7,239.92) -- (543.24,241.72) ;

\draw    (548.49,242.46) -- (574.75,248.4) ;

\draw    (417.18,61.47) .. controls (438.91,57.53) and (413.43,75.57) .. (429.23,75.59) .. controls (445.02,75.61) and (415.72,95.75) .. (436.69,91.14) ;

\draw    (506.84,192.39) .. controls (501.22,203.15) and (441.31,191.48) .. (465.2,203.15) .. controls (489.09,214.82) and (475.71,219.47) .. (480.96,228.37) ;

\draw  [color={rgb, 255:red, 0; green, 0; blue, 0 }  ,draw opacity=1 ][line width=4.5] [line join = round][line cap = round] (116.3,140.1) .. controls (116.65,140.1) and (117.05,139.71) .. (117.05,139.36) ;

\draw (187.58,166.99) node [anchor=north west][inner sep=0.75pt]    {$\mathnormal{U}$};

\draw (129.8,125.71) node [anchor=north west][inner sep=0.75pt]  [font=\small]  {$\mathnormal{p}$};

\draw (312.33,129.88) node [anchor=north west][inner sep=0.75pt]  [font=\scriptsize]  {$\mathnormal{\partial } U$};

\draw (377.71,136.52) node [anchor=north west][inner sep=0.75pt]  [font=\scriptsize]  {$\partial B( p,\frac{r}{2})$};

\draw (437.82,100.64) node [anchor=north west][inner sep=0.75pt]  [font=\scriptsize]  {$\xi $};

\draw (471.21,232.3) node [anchor=north west][inner sep=0.75pt]  [font=\scriptsize]  {$\xi $};

\draw (336.15,267.87) node [anchor=north west][inner sep=0.75pt]    {$M$};

\draw (513.07,65.69) node [anchor=north west][inner sep=0.75pt]  [font=\scriptsize]  {$\partial B( p,r)$};

\end{tikzpicture}

\caption{Hypersurface $\xi$ in $M$}
    \label{figtikzdiagram2}
\end{figure}
    
    For any $\epsilon>0$, there exists a function $\rho:X\to [0,R_1]$ such that $\rho$ is a smoothening of the distance function $r_0$ from $\partial^-X$ on $X$, that is, for all $x\in X$ we have $\lvert \rho(x)-r_0(x)\rvert<\epsilon$,  and also the gradient $|\nabla_{X}\rho_i|\leq (1+\epsilon)^2$. We can further assume that $\rho = 0$ on $\partial^+ X$ and $\rho = \frac{r}{2}$ on $\partial^- X$. For details on the construction and existence of such a function, we refer to the appendix of \cite{WXZ}.

    Define the function $h:X\to \mathbb{R}$ as $h(x)=-\frac{2\pi(1+\epsilon)}{r}\text{tan}(\frac{\pi}{r}\rho(x))$. Then, 
    \begin{align}
    \frac{h^2}{2}-|\nabla_{X}h|\geq-\frac{2\pi^2(1+\epsilon)^2}{r^2}.
    \end{align}
    By our assumption, the scalar curvature on $M$ satisfies a quadratic decay condition with decay constant $C>4\pi^2$, and hence on $X$ the scalar curvature $R_X$ has a lower bound $\frac{C}{r^2}$. 
     Choosing $\epsilon$ small enough so that $C\geq4\pi^2(1+\epsilon)^2$, we get that on $X$ the scalar curvature has the lower bound 
     \begin{align}
     R_X\geq \frac{C}{r^2}\geq \frac{4\pi^2(1+\epsilon)^2}{r^2}.
     \end{align}
     Therefore we have
     \begin{align}\label{equationpositivityrhs}
         \frac{R_X}{2} + \frac{h^2}{2} -|\nabla_{X}h|\geq 0.
     \end{align}

    By our definition of $h$, we have satisfied the boundary conditions $H_{\partial^+X}>h$ on $\partial ^+ X$, and $H_{\partial^-X}>-h$ on $\partial^ - X$. Then by Lemma \ref{lemmamububbleexistence}, there exists a smooth $\mu$-bubble hypersurface $\xi$ in $X$ satisfying the second variation formula given by Equation \ref{secondvariation}:

$$
    0\leq \int_{\xi} (-\phi \Delta_{\xi}\phi + \frac{R_{\xi}}{2}\phi^2 - \frac{R_X}{2}\phi^2 - \frac{|A|^2}{2}\phi^2 - \frac{h^2}{2}\phi^2 + \langle\nabla_{X}h,\nu_{\xi}\rangle\phi^2) d\mathcal{H}^{n-1}.
$$
For convenience, we restate our notation here: $\phi$ is any smooth function on $\xi$, $\Delta_{\xi}$ is the Laplacian on $\xi$ with the induced metric, $R_{\xi}$ is the intrinsic scalar curvature of $\xi$, $|A|^2$ is the squared norm of the second fundamental form of $\xi$, and $\nu_{\xi}$ is the unit normal vector field of $\xi$ towards the direction of $\partial^- X$.

Then by Equation \ref{equationpositivityrhs}, 
\begin{align*}
    \int_{\xi} (-\phi \Delta_{\xi}\phi + \frac{R_{\xi}}{2}\phi^2) d\mathcal{H}^{n-1} & \geq \int_{\xi} (\frac{R_X}{2}\phi^2 + \frac{|A|^2}{2}\phi^2 + \frac{h^2}{2}\phi^2 - \langle\nabla_{X}h,\nu_{\xi}\rangle\phi^2) d\mathcal{H}^{n-1}\\
    & \geq 0.
\end{align*}

Now by an argument similar to \cite{SY} explained in Section \ref{sectionprelim}, $\xi$ admits a conformal metric of positive scalar curvature. In particular, the first eigenvalue of the conformal Laplacian of $\xi$, $\zeta_{1,\iota^*g}>0$.

Now, consider the compact submanifold $V\subset M$ which has boundary $\xi$. Note that $B(p,\frac{r}{2})\subset V\subset B(p,r)$. On $V$, we will show that $\eta_{1,g}$, the first eigenvalue of the conformal Laplacian of $V$ with the boundary constraint given in Equation \ref{equationboundaryconstraint}, is also positive.

By the first variation formula stated in Equation \ref{equationfirstvariation}, the mean curvature of $\xi$ with respect to unit normal vector field pointing towards $\partial^- X$ equals $h$ at every point of $\xi$. By our definition, $h\leq 0$ on $X$, and hence the mean curvature of $\partial V$ with respect to the unit normal vector field pointing away from $V$ is non-positive. Therefore, the mean curvature of $\partial V$ with respect to the unit normal vector field of $V$ pointing towards the interior of $V$, which we denote by $H_{\partial V}$, is non-negative. 

By Proposition 5.1 in \cite{ConformalXu}, to show that $\eta_{1,g}>0$, it is enough to show that $Y(V,\partial V, [\iota^* g])>0$, where 
\begin{align}
    Y(V,\partial V, [\iota^* g])\coloneq \inf_{u\in C^\infty, u>0}\frac{\int_V (\frac{4(n-1)}{n-2}\lvert \nabla_V u\rvert^2 + R_V u^2)d\mathcal{H}^n + 2(n-1)(n-2)\int_{\partial V}H_{\partial V}u^2d\mathcal{H}^{n-1}}{(\int_{V}u^{\frac{2n}{n-2}}d\mathcal{H}^{n})^{\frac{n-2}{n}}}.
\end{align}
 In our case, we have $R_V>c>0$ for some positive real number $c$ by the compactness of $V$, and also we know $H_{\partial V}\geq 0$. Hence, $Y(V,\partial V, [\iota^* g])>0$ and thus $\eta_{1,g}>0$. Now we are in a position to apply Corollary \ref{psc-conformal} and conclude that there exists a positive scalar curvature metric $g'$ on $V$ with product boundary.

 Now, note that since $M\setminus U$ is diffeomorphic to a product manifold $N\times [0,\infty)$, the projection to the second factor gives a Morse function $f$ with no critical points. Both $B(p,\frac{r}{2})$ and $\xi = \partial V$ are homologous to $\partial U = f^{-1}(0)$ or any other level set of $f$. By the density of transverse maps, we can modify $f$ slightly without adding any critical points to ensure that $\xi$ is transverse to the flow lines of $f$. Consider a map $\pi$ on $\xi$ that takes each point $x\in \xi$ to that point of $f^{-1}(0)$ which intersects the flow line through $x$. By transversality and compactness, $\pi$ is a smooth covering map. Since $\xi$ and $f^{-1}(0)$ are homologous, $\pi$ is a covering map of degree $1$, and hence is a diffeomorphism. Additionally, $\xi$ is isotopic to $f^{-1}(0)$ via the isotopy given by moving through the flow lines of $f$. Thus, we can write $M$ as $V\cup (\partial V\times [0,\infty))$. 
 
 The metric on $\partial V$ induced by $g'$, which we denote as $\iota^* g'$, is a positive scalar curvature metric. On $\partial V\times [0,\infty)$ consider the product metric with $\iota^*g'$. Since $g'$ itself is a product metric near $\partial V$, we can identify $\partial V\subset V$ with $\partial V\times \{0\}\subset \partial V\times [0,\infty)$ to get a smooth Riemannian metric on $M$. This is a metric of uniformly positive scalar curvature, since the scalar curvature is bounded below by the scalar curvature on the compact submanifold $V$. This completes the proof.
\end{proof}

Note that the mean convexity of $B(p,r)$ is crucial to the proof. We use it to prove the existence of a $\mu$-bubble hypersurface which is mean convex, which is then used to apply Corollary \ref{psc-conformal} and conclude that there exists a positive scalar curvature metric on $V$ with product boundary. Without the mean convexity of $\partial V$ it is difficult to ensure the positivity of $\eta_{1,g}$. Hence the crucial element for the construction of a uniformly positive scalar curvature metric by this method is the existence of a stable mean convex hypersurface in the product region.

 \begin{remark}
Instead of the assumption of mean convexity of some geodesic ball, the same proof works with the decay constant lower bound as $16\pi^2$ if we instead assume the existence of any mean convex separating hypersurface in an annulus $B(p,2r)\setminus B(p,r)$.
\end{remark}

\begin{remark}
The decay constant in Theorem \ref{theoremproductupsc} is likely not optimal. We can retrieve the constant $\frac{n-1}{n}$ conjectured by Chen in \cite{Chen} if we instead assume the mean convexity of the geodesic sphere at a particular distance. In particular,  for the function $f$ defined in Lemma $3.4$ of \cite{Chen}, if $\partial B(p, R')$ is mean convex for a large enough $R'\in f^{-1}(0)$, then the same construction as above applies with decay constant $C>\frac{n-1}{n}$.
\end{remark}

\begin{remark}
We use the product structure of the manifold to conclude that the $\mu$-bubble surface is isotopic to a level set, and hence the end of the manifold is diffeomorphic to $\partial V\times[0,\infty)$. If $M\setminus V$ is instead assumed to admit a proper Morse function that is constant on $\partial V$ and has no critical points of index $\geq n-2$, then by the construction described in Theorem \ref{theoremmorsepsc}, we can similarly construct a metric of uniformly positive scalar curvature on $M$.
\end{remark}

\section{Applications}\label{sectionapplications}

There are a few straightforward applications of our methods that we discuss in this section. The first of those concerns the existence of mean convex or mean concave foliations in positive scalar curvature manifolds, and closely relates to the methods by Song in \cite{Song}. We refer to the Appendix $C$ of \cite{Song} for a short discussion on mean curvature flow or level set flow, and to Section $2$ of the same for the definition of singular strictly mean convex foliation. We begin with the following corollary which is a direct application of Theorem \ref{theorempscexhaustion}.

\begin{corollary}
    Let $(M,g)$ be an open, orientable, connected, and complete Riemannian manifold of dimension $3\leq n\leq 7$ with positive scalar curvature such that $M$ admits no metric of uniformly positive scalar curvature. Then for any $p\in M$ there exists some $R>0$ such that $M$ does not contain a compact minimal surface lying in $M\setminus B(p,R)$ which lies in the same homology class as $\partial B(p,R)$. In particular, there exists no compact exhaustion of $M$ with minimal hypersurface boundaries.
\end{corollary}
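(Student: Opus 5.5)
The plan is to argue by contradiction and reduce to Theorem \ref{theorempscexhaustion}. The ``in particular'' clause is immediate from that theorem: if $M$ had a compact exhaustion $\{U_i\}$ with minimal boundaries, then $M$ would admit a complete metric of uniformly positive scalar curvature, contradicting the hypothesis. So the real content is the first statement, and the idea is to manufacture such an exhaustion from minimal hypersurfaces lying arbitrarily far out.

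Fix $p\in M$ and suppose the statement fails. Then for every $R>0$ there is a closed minimal hypersurface $\Sigma_R\subset M\setminus B(p,R)$ that is homologous to $\partial B(p,R)$. I would pick the radii inductively. Set $R_1$ arbitrary and let $\Sigma_1=\Sigma_{R_1}$. Given $\Sigma_i$, which is compact, choose $R_{i+1}>\max\{R_i+1,\ \max_{\Sigma_i} d(p,\cdot)\}$, and set $\Sigma_{i+1}=\Sigma_{R_{i+1}}$.

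Next, use the homology condition to produce nested compact regions. Since $\Sigma_i$ is homologous to $\partial B(p,R_i)$, which bounds $B(p,R_i)$, the chain $\Sigma_i-\partial B(p,R_i)$ bounds a compact region $W_i$ lying outside $B(p,R_i)$. Define $U_i=B(p,R_i)\cup W_i$. This is a compact domain with $\partial U_i=\Sigma_i$, after passing to the relevant components, and it contains $B(p,R_i)$. By the choice of $R_{i+1}$, the region $U_i$ lies inside $B(p,R_{i+1})\subset U_{i+1}$. The $U_i$ are therefore nested, and their union is $M$ because $R_i\to\infty$. This gives a compact exhaustion by domains with closed minimal boundary, and Theorem \ref{theorempscexhaustion} then yields a uniformly positive scalar curvature metric, which is the desired contradiction.

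The main obstacle is making the construction of $U_i$ rigorous. Two points need care. First, $\partial B(p,R)$ need not be smooth, so I would replace it by a smooth level set of a smoothed distance function, as in \cite{WXZ}. Second, the homology relation must actually produce a bounded region between the two hypersurfaces, which calls for a choice of coefficients (say $\mathbb{Z}_2$) and an argument that the $(n)$-chain they cobound is compact. If $\Sigma_i$ has extra components that are null-homologous, I would discard them, or enclose them in the region, so that $\partial U_i$ consists only of minimal pieces.

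A related subtlety is that the proof of Theorem \ref{theorempscexhaustion} itself treats non-stable boundaries by finding stable minimal hypersurfaces between consecutive boundaries. That step requires the consecutive boundaries to be disjoint, and the choice of $R_{i+1}$ above ensures this.
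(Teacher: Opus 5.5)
Your proposal is correct and is essentially the paper's argument: negate the statement, choose radii inductively so that each minimal hypersurface lies inside the next ball, assemble the resulting nested regions into a compact exhaustion with minimal boundaries, and contradict Theorem \ref{theorempscexhaustion}. You supply more detail than the paper's three-line sketch, and two small points are worth fixing.

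First, choose $R_{i+1}>\max_{U_i} d(p,\cdot)$ rather than $\max_{\Sigma_i} d(p,\cdot)$. A compact region whose boundary lies in $B(p,R_{i+1})$ can still contain bounded components of $M\setminus B(p,R_{i+1})$, so your choice does not by itself guarantee $U_i\subset B(p,R_{i+1})$ or nestedness.

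Second, the homology relation has to be read in $M\setminus \mathrm{int}\,B(p,R)$. In $H_{n-1}(M)$ the class of $\partial B(p,R)$ is zero, so the hypothesis would carry no information. Your construction of $W_i$ outside the ball already uses the relation in this sense, so this is a matter of stating the interpretation explicitly rather than changing the argument.
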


\begin{proof}
    If on the contrary $M$ had a compact minimal hypersurface homologous to $\partial B(p,r)$ lying in $M\setminus B(p,r)$ for all $r$, then taking a sequence $r_1<r_2<r_3\dots$ and choosing corresponding minimal hypersurfaces $Z_i$ such that $Z_i\subset B(p,r_{i+1})$ would result in an exhaustion of $M$ by compact domains with minimal hypersurface boundaries. However, by Theorem \ref{theorempscexhaustion}, the existence of such an exhaustion coupled with positive scalar curvature metric on $M$ implies the existence of a uniformly positive scalar curvature metric on $M$, which contradicts our assumption.
\end{proof}

\begin{corollary}\label{corollaryfoliation}
    Let $(M,g)$ be an open, orientable, and connected complete Riemannian manifold of dimension $3\leq n\leq 7$ with positive scalar curvature such that $M$ admits no metric of uniformly positive scalar curvature. If there exists a compact exhaustion $\{U_i\}$ of $M$ such that each $\partial U_i$ is mean convex, then there exists a (possibly empty) subset $U\subset M$ having the property that $U_i\setminus U$ admits a singular strictly mean convex foliation whenever it is non-empty. If instead there exists an exhaustion $\{U_i\}$ where each $\partial U_i$ is mean concave, then there exists $i_0\geq0$ such that $M\setminus U_i$ admits a singular strictly mean concave foliation for all $i\geq i_0$.
\end{corollary}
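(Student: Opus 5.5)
We argue by contradiction, using Theorem \ref{theorempscexhaustion}: since $M$ admits no uniformly positive scalar curvature metric, $M$ cannot contain an exhaustion by compact domains with minimal boundaries. By the preceding corollary, there is even a radius beyond which no compact minimal hypersurface homologous to the boundaries of the exhaustion exists. The mean convex and mean concave cases are then handled by running level set flow (mean curvature flow in the weak sense, as in Appendix C of \cite{Song}) from $\partial U_i$. In each case we show that the flow cannot become stationary at a minimal hypersurface in the relevant region, and the flow then sweeps out the required singular foliation.

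\textbf{Mean convex case.} Fix $i$ and run level set flow starting from $\partial U_i$, moving inward. Since $\partial U_i$ is strictly mean convex (after a small perturbation if only weakly mean convex), the flow is monotone: the sets $U_i^t$ are nested and decreasing. By the results quoted in \cite{Song} (White's theory of mean convex level set flow), the flow either vanishes in finite time or converges, as $t\to\infty$, to a stable minimal hypersurface $\Sigma_i\subset U_i$, possibly empty. We set $U$ to be the region enclosed by the limits, i.e.\ $U=\bigcup_i U_i^\infty$ with $U_i^\infty=\bigcap_t U_i^t$. Then $U_i\setminus U_i^\infty$ is foliated by the level sets of the arrival time function, which is by definition a singular strictly mean convex foliation.

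We then need $U_i^\infty$ to be independent of $i$ (or at least nested in a controlled way), so that $U_i\setminus U$ is foliated. We argue this by avoidance/comparison: $U_i\subset U_{i+1}$ gives $U_i^t\subset U_{i+1}^t$. Moreover, $\partial U_i^\infty$ is a minimal hypersurface and so acts as a barrier, which should force $U_{i+1}^\infty\supseteq U_i^\infty$; we would also like the reverse inclusion. If the limits $\Sigma_i$ escaped to infinity, they would give minimal boundaries arbitrarily far out, hence an exhaustion by domains with minimal boundary, contradicting Theorem \ref{theorempscexhaustion}. Therefore all $\Sigma_i$ lie in a fixed compact set, and $U$ is a well-defined compact (possibly empty) set.

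\textbf{Mean concave case.} Here we flow $\partial U_i$ outward into $M\setminus U_i$. The ends are noncompact, so we work in the compact regions $U_j\setminus U_i$ for $j>i$ and let $j\to\infty$. Mean concavity of $\partial U_i$ together with that of $\partial U_j$ gives barriers: the outward flow from $\partial U_i$ is monotone and stays inside $U_j$. If this flow got stuck at a minimal hypersurface for infinitely many $i$, we would again obtain an exhaustion with minimal boundaries, contradicting Theorem \ref{theorempscexhaustion}. Hence there is an $i_0$ such that for $i\ge i_0$ the flow never stalls, and the arrival time function foliates $M\setminus U_i$ by mean concave level sets.

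\textbf{Main obstacle.} The delicate point is the limit analysis. We must show that the limiting minimal hypersurfaces of the flow are smooth and homologous to the boundaries $\partial U_i$, so that they yield a genuine exhaustion to which Theorem \ref{theorempscexhaustion} applies. We also must handle the possibility of stalling at finitely many minimal hypersurfaces, which is exactly why the statement allows the exceptional set $U$ and the index $i_0$. For smoothness we would invoke White's regularity for mean convex flows in dimension $\le 7$, and we would use the separating property of the limit to obtain the homology class.
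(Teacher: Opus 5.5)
Your overall strategy is the paper's: run the flow from each $\partial U_i$, and use Theorem \ref{theorempscexhaustion} to rule out infinitely many minimal obstructions. The mean convex case, however, is not closed.

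\textbf{Mean convex case.} With $U=\bigcup_i U_i^\infty$, the flow from $\partial U_i$ foliates only $U_i\setminus U_i^\infty$, and it stops at $\partial U_i^\infty$, not at $\partial U$. So unless $U_i^\infty=U$ for all large $i$, you have no foliation of $U_i\setminus U$. That is exactly the ``reverse inclusion'' you leave open.

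Your substitute does not supply it. Unboundedness of the $\Sigma_i$ does not by itself make the regions they enclose exhaust $M$, so ``no exhaustion'' does not give ``all $\Sigma_i$ lie in a fixed compact set''. Even granting boundedness, an increasing family of compact sets need not be eventually constant.

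The missing idea is what drives the paper's iteration. The flow from a later $\partial U_j$ cannot cross an already found minimal boundary $Z$, by the maximum principle. So at each step the flow either sweeps all of $U_j\setminus V$ down to the current region $V$, or it stops at a genuinely new minimal hypersurface outside $V$. In your notation this reads as follows:
\begin{itemize}
\item If $U_{i+1}^\infty\subset U_i$, then $U_{i+1}^\infty$ is a static solution inside the strictly mean convex domain $U_i$.
\item Avoidance then gives $U_{i+1}^\infty\subset U_i^\infty$, hence equality.
\item Therefore every strict jump $U_i^\infty\subsetneq U_{i+1}^\infty$ produces a new minimal boundary not contained in $U_i$.
\end{itemize}
The paper then argues that only finitely many such new hypersurfaces can occur, since otherwise the nested regions they bound would give a minimal exhaustion contradicting Theorem \ref{theorempscexhaustion}. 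This passage from ``infinitely many'' to ``exhaustion'' is asserted rather than proved in the paper, and your ``escape to infinity'' step relies on the same thing. After the last jump, at some $i_0$, set $U=U_{i_0}^\infty$. Then $U_i\setminus U=U_i\setminus U_i^\infty$ is foliated for all $i\ge i_0$, which is what the statement requires.

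\textbf{Mean concave case.} Your localization to $U_j\setminus U_i$ is incorrect. Mean concavity of $\partial U_j$, with respect to the normal into $U_j$, means its mean curvature vector points out of $U_j$. That is the same direction in which the flow from $\partial U_i$ moves. Avoidance therefore only gives $U_i^t\subset U_j^t$, with $U_j^t$ itself expanding. So $\partial U_j$ is not a barrier, and the band $U_j\setminus U_i$ is not mean convex along its outer boundary.

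Drop the localization and run the flow in $M\setminus U_i$ directly, as the paper does. Your counting argument is then sound and is essentially the paper's: each stall yields a compact region with minimal boundary containing $U_i$, so infinitely many stalls give a minimal exhaustion. Finitely many stalls give the required $i_0$.
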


\begin{proof}
    Let $U_0\subset U_1\subset U_2\dots$ be a compact exhaustion of $M$ such that each boundary component of $U_i$ is mean convex. We start with $\partial U_0$. On running the mean curvature flow from $\partial U_0$, by mean convexity of $\partial U_0$ we get one of two possible situations:
    \begin{enumerate}
        \item either the mean curvature flow converges to a minimal hypersurface $Z_0$ in $U_0$;
        \item or we get a singular strictly mean convex foliation of $U_0$, where the leaves are the level sets of the flow.
    \end{enumerate}
    Note that in the first case, $Z_0$ is homologous to $\partial U_0$. Denote by $V_0$ the compact submanifold of $M$ which has $Z_0$ as its boundary. Then $V_0\subset U_0$. Now, in either case, we proceed to the next element of the exhaustion, $U_1$. We again run a mean curvature flow starting at $\partial U_1$, and because of convexity, the level sets of the flow again move towards the interior of $U_1$. If we found a minimal hypersurface $Z_0$ in the first iteration, then the mean curvature flow cannot cross $Z_0$ by the maximum principle, and hence we again get one of two possibilities:
    \begin{enumerate}
        \item either the flow stops at a new minimal hypersurface $Z_1$ in $U_1$ such that $Z_1$ does not intersect $Z_0$;
        \item or we have a singular weakly mean convex foliation of $U_2\setminus V_0$.
    \end{enumerate}
    If in the first iteration we did not get a minimal hypersurface $Z_0$, then again have a dichotomy similar to the first iteration. That is, either we get a singular weakly mean convex foliation of $U_1$, or we get a minimal hypersurface homologous to $\partial U_1$ in $U_1$, which we again denote by $Z_1$. In this case, we denote the compact submanifold of $M$ that has $Z_1$ as its boundary by $V_1$.
    
    We continue iteratively, running a mean curvature flow starting at $\partial U_i$ for each $i$. At each stage, we either get a minimal hypersurface $Z_i$ homologous to $\partial U_i$, or we get a singular strictly mean convex foliation of either $U_i$ or of $U_i\setminus V_j$ for some $j<i$. Here by $V_j$ we denote the compact submanifold of $M$ with $Z_j$ as boundary.

    If we get an infinite sequence of minimal hypersurfaces $Z_i$, then the sequence $\{V_i\}$ would be an exhaustion of $M$ with minimal boundary. Since $M$ has positive scalar curvature, we could then apply Theorem \ref{theorempscexhaustion} to get a uniformly positive scalar curvature metric on $M$, contradicting our assumption. Therefore, we cannot get infinitely many minimal hypersurfaces by this process, which means that there exists some $Z_i$ (possibly empty) such that each $U_j\setminus V_i$ admits a singular strictly mean convex foliation for all $j\geq i$. Denoting this $V_i$ as $U$, the conclusion follows.

    Now assume that $M$ has an exhaustion $\{U_i\}$ with mean concave boundary. We again start by applying the mean curvature flow starting at $\partial U_0$. However, since $\partial U_0$ is mean concave with respect to the normal vector field pointing inward towards the interior of $U_0$, it has positive mean curvature with respect to the normal vector field pointing outward from $U_0$. Hence, the hypersurface $\partial U_0$ flows outward away from $U_0$ under the mean curvature flow. Like before, either the mean curvature flow converges to a minimal hypersurface $Z_0$, or we get a singular strictly mean convex foliation of $M\setminus U_0$ with respect to the outward pointing vector field from $U_0$. In the second case, it is actually a singular strictly mean concave foliation with respect to the inward pointing vector field, as is our convention. 

    Now, assume that we have obtained a singular strictly mean concave foliation of $M\setminus U_0$. We start a new mean curvature flow at $\partial U_1$, and proceed as before. Again, we can either get a minimal hypersurface or a singular strictly mean concave foliation of $M\setminus U_1$. However, since the first iteration did not produce a minimal hypersurface $Z_0$, the flow starting at $\partial U_1$ cannot converge to a minimal hypersurface, since that would contradict the maximum principle. Therefore, getting a singular strictly mean concave foliation for $M\setminus U_0$ implies that we also get a singular strictly mean concave foliation for $M\setminus U_1$, and by the same argument, for each $M\setminus U_i$.

    On the other hand, if we had obtained a minimal hypersurface $Z_0$ in the first iteration, then we consider some $i'$ such that $Z_0$ lies in the interior of $U_{i'}$. We start the mean curvature flow at $\partial U_{i'}$ and repeat the argument. We either get a new minimal hypersurface $Z_{i'}$, or a singular strictly mean concave foliation of $M\setminus U_{i'}$, and hence of $M\setminus U_j$ for all $j>i'$. If we get such a $Z_{i'}$, we can again choose $i''$ such that $Z_{i'}$ lies in the interior of $U_{i''}$ and then proceed as before. Repeating this process eventually results in some $U_{i_0}$ such that $M\setminus U_{i_0}$ has a singular strictly mean concave foliation, since otherwise we would get a nested infinite sequence of minimal hypersurfaces in $M$, which would give rise to an exhaustion with minimal boundary and lead to a contradiction as in the mean convex case. Then there exists a singular strictly mean concave foliation of $M\setminus U_i$ for all $i\geq i_0$, and we are done.
\end{proof}

In the above argument, the mean curvature flow can of course be started at some arbitrary mean convex or mean concave hypersurface. Take an arbitrary set $N\subset M$ with mean convex (respectively mean concave) boundary. The same process then tells us that there exists a large enough $U\subset M$ such that if $U\subset N$ then $N\setminus U$ has a singular strictly mean convex foliation (respectively, $M\setminus N$ has a singular strictly mean concave foliation). The existence of an exhaustion with a mean convex or a mean concave boundary is assumed to produce exactly an increasing sequence of such sets $N$.

\begin{corollary}
    Let $M$ be an orientable and connected compact manifold of dimension $3\leq n\leq 7$ with boundary. If $M$ admits a Riemannian metric of positive scalar curvature such that $\partial M$ is a mean convex stable hypersurface, then the interior of $M$ admits a complete Riemannian metric of uniformly positive scalar curvature. Conversely, if the interior $\mathring{M}$ of a compact manifold $M$ with boundary admits a complete Riemannian metric of uniformly positive scalar curvature, then for any $p\in \mathring{M}$ there exists a $R>0$ such that if $M\setminus B(p,R)$ contains a mean convex hypersurface in the same homology class as $\partial B(p,R)$ then $M$ admits a metric of positive scalar curvature with stable mean convex boundary.
\end{corollary}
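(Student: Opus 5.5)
For the forward direction, I would argue exactly as in the proof of Theorem \ref{theorempscexhaustion}, with the closed stable minimal hypersurfaces there replaced by the stable mean convex boundary $\partial M$. Let $g$ be the given metric. The first task is to show $\zeta_{1,\iota^*g}>0$ on $\partial M$. For this I use the stability inequality of $\partial M$, read as the second variation of the functional $\mathcal{A}$ with constant $h=H$ if $\partial M$ is CMC. Otherwise I read it as stability of the Jacobi-type operator $-\Delta - \frac12(R_M - R_{\partial M} + H^2 + |A|^2)$.

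Since $R_M>0$ and the terms $H^2$ and $|A|^2$ only help, this gives
\[
\int_{\partial M}\bigl(|\nabla u|^2+\tfrac12 R_{\partial M}u^2\bigr)\,d\mathcal{H}^{n-1}>0
\]
for all nonzero $u$. The Schoen--Yau argument of Section \ref{sectionprelim} then gives $\zeta_{1,\iota^*g}>0$.

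Next I show $\eta_{1,g}>0$. I apply Proposition 5.1 of \cite{ConformalXu} as in the proof of Theorem \ref{theoremproductupsc}. Compactness gives $R_M\ge c>0$, and mean convexity gives $H_{\partial M}\ge 0$, so the relative Yamabe quotient $Y(M,\partial M,[\iota^*g])$ is positive.

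Corollary \ref{psc-conformal} then yields a psc metric $k'$ on $M$ which is a product $\iota^*k'+dt^2$ near $\partial M$, with $\iota^*k'$ psc. I identify $\mathring{M}$ with $M\cup(\partial M\times[0,\infty))$ via a collar. The product metric on the end is complete, and its scalar curvature equals $R_{\iota^*k'}$, which is bounded below on the compact $\partial M$. Together with the compactness of $M$, this gives a complete UPSC metric. This is the product-end step at the end of the proof of Theorem \ref{theoremproductupsc}.

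For the converse, I take a complete UPSC metric on $\mathring{M}$ with $R\ge\epsilon>0$. Such a metric in particular has $C$-quadratic decay at infinity for every $C$, in particular $C>4\pi^2$. Also, $\mathring{M}$ has product ends. I then rerun the proof of Theorem \ref{theoremproductupsc}, taking $R$ as in that proof, or as in the first remark after it if the hypersurface is only given in an annulus.

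Let $\Sigma\subset \mathring{M}\setminus B(p,R)$ be mean convex and homologous to $\partial B(p,R)$. I build the band $X$ between $\partial B(p,R/2)$ (or $\partial B(p,R)$) and $\Sigma$. With $h\le 0$, $h=0$ on $\Sigma$ and $h\to-\infty$ on the inner boundary, the $\mu$-bubble $\xi$ exists by Lemma \ref{lemmamububbleexistence}. It has $H_\xi=-h\ge 0$ with respect to the normal pointing into $V$, and its stability inequality holds. Here $V$ is the compact region bounded by $\xi$.

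The product-end argument shows that $\xi$ is isotopic to the level sets of the end, so $V$ is diffeomorphic to $M$. Hence $(V,g|_V)$ is a psc metric on $M$ with mean convex boundary. The remaining issue is the meaning of ``stable''. I would interpret stability of the boundary as stability for the $\mu$-bubble functional, i.e. the second variation inequality \eqref{secondvariation}, which $\xi$ satisfies by minimality.

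The main obstacle is reconciling these notions of stability in the two directions. The forward direction needs a stability inequality strong enough to give $\zeta_{1,\iota^*g}>0$. I expect to handle this by stating stability with respect to the weighted functional $\mathcal{A}$ for some $h$ satisfying \eqref{hsquare}, in which case the computation of Section \ref{sectionprelim} applies verbatim. In the converse, the $\mu$-bubble is stable in exactly this sense. Alternatively, one can observe that only $\zeta_{1,\iota^*g}>0$ and $H\ge 0$ are used, and that both are produced by the $\mu$-bubble.
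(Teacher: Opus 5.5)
This is essentially the paper's proof. The forward direction is the same chain: stability gives $\zeta_{1,\iota^*g}>0$ via the Schoen--Yau argument, positive scalar curvature with $H_{\partial M}\ge 0$ gives $\eta_{1,g}>0$, and then Corollary~\ref{psc-conformal} plus a half-cylinder finish. The converse is the same $\mu$-bubble construction with $h\le 0$, followed by the product-end identification $V\cong M$; your discussion of which notion of ``stable'' is meant clarifies something the paper leaves implicit.

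Two cautions. First, drop the alternative inner boundary $\partial B(p,R)$: a suitable $h$ exists only if the band has width greater than about $\pi/\sqrt{\epsilon}$, which is why the paper imposes a distance condition on the hypersurface. Instead, take $h$ to be a function of the distance to $\partial B(p,R/2)$ that reaches $0$ within distance $R/2$ and is held at $0$ afterwards; this also handles a $\Sigma$ that does not lie in any single annulus. Second, the isotopy $\xi\simeq\partial U$ you import is only as good as the transversality argument in the proof of Theorem~\ref{theoremproductupsc}. That argument does not by itself prevent $\xi$ from folding over the cross-section, so this step carries the same weakness as the paper's.
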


\begin{proof}
    The first part follows from Corollary \ref{psc-conformal}. In particular, let $g$ be a metric of positive scalar curvature on $M$ such that $\partial M$ is stable and mean convex. Then by the argument described in Theorem \ref{theoremproductupsc} we get $\eta_{1,g}>0$ and $\zeta_{1,\iota^*g}>0$. Corollary \ref{psc-conformal} therefore gives us a positive scalar curvature metric on $M$ that is a product metric on some neighbourhood of the boundary. Consider the product manifold $\partial M\times [0,\infty)$ with the same product metric. Attaching this cylinder to $M$ by identifying the boundaries through the identity map yields a complete Riemannian metric of uniformly positive scalar curvature on the interior of $M$.

    For the converse direction, the argument is similar to that in Theorem \ref{theoremproductupsc}. Let the scalar curvature of $\mathring{M}$ be greater than $\epsilon$. Since $\mathring{M}$ is the interior of a compact manifold, it has product ends, and there is a submanifold $U\in \mathring{M}$ such that $\mathring{M}\setminus U$ is diffeomorphic to $\partial M\times [0,\infty)$. Fix a point $p\in M$ and take $R>0$ large enough so that $U\subset M\setminus B(p,R)$ and $\partial B(p,R)$ is homologous to $\partial U$. Let there exist a mean convex hypersurface $Z$ in $M\setminus B(p,R)$ such that the distance $d(Z,\partial U)> \frac{\pi^2}{\epsilon}$, and $Z$ lies in the same homology class as $\partial B(p,R)$ (or $\partial U$). By the uniform positivity of the scalar curvature on $\mathring{M}$, if , we can find a $\mu$-bubble hypersurface $\xi$ in the band bounded by $B(p,R)$ and $Z$ such that $\xi$ is stable and has positive mean curvature. Additionally, $\xi$ is isotopic to $\partial U$. Thus, the compact submanifold of $\mathring{M}$ that is bounded by $\xi$ is diffeomorphic to $M$. The restriction of the metric on $\mathring{M}$ to this submanifold is a positive scalar curvature metric on $M$ with stable mean convex boundary. 
\end{proof}

Our final application shows that our constructions of manifolds with uniformly positive scalar curvature metrics can be slightly modified to obtain manifolds with positive scalar curvature, bounded geometry, and a desired rate of volume growth. Before stating the theorem, let us explicitly define what it means for a manifold to have the desired rate of volume growth. We refer to \cite{BM} and \cite{GP} for detailed discussions on controlling growth types of Riemannian manifolds.

\begin{defn}
    A function $v:\mathbb{N} \to \mathbb{R}^+$ is said to have bounded growth of derivative if there exists a positive integer $L$ such that, $\forall n\in \mathbb{N}$,
    $$\frac{1}{L}\leq v(n+2)-v(n+1)\leq L(v(n+1)-v(n)).$$ 
\end{defn}

\begin{defn}
    Two non-decreasing functions $f,h:\mathbb{N}\rightarrow \mathbb{R}_{+}$ are said to be of the same growth type if there exists an integer $A\geq 1$ such that for all $n\in \mathbb{N}$, 
\Bea
f(n)\leq Ah(An+A)+A \ \ \text{and} \ \ h(n)\leq Af(An+A)+A. 
\Eea
\end{defn}

\begin{corollary}
    Let $M$ be an open orientable manifold satisfying the conditions of Theorem \ref{theoremmorsepsc}, Theorem \ref{theorempscexhaustion}, or Theorem \ref{theoremproductupsc}. Let $v$ be any function with bounded growth of derivative.
    \begin{enumerate}
     
     \item If $M$ has finitely many ends, then $M$ admits a complete Riemannian metric of bounded geometry with positive scalar curvature such that the volume growth function has the same growth type as $v$. 
    
     \item If $M$ has infinitely many ends, then $M$ admits a complete Riemannian metric of bounded geometry with positive scalar curvature such that the volume growth function has the same growth type as $v$ if and only if $\lim_{n\to \infty}\frac{v(n)}{n}=\infty$.
     \end{enumerate}
     If $M$ satisfies the conditions of Theorem \ref{theoremmorsepsc} or Theorem \ref{theoremproductupsc} then the metric can be chosen such that it additionally has uniformly positive scalar curvature.
\end{corollary}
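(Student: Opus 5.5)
In each of the three situations the proofs above produce the same intermediate object: a decomposition $M=\bigcup_i K_i$ into compact pieces, each carrying a positive scalar curvature metric that is a product near its boundary, with boundary metrics matching across the gluing hypersurfaces. For Theorem \ref{theoremproductupsc} the decomposition is $V$ together with a single cylinder $\partial V\times[0,\infty)$. The plan is to reuse this decomposition and to control volume growth by choosing the necks. We first rescale each piece so that its scalar curvature lies between fixed positive constants and its injectivity radius and curvature derivatives are bounded. We then connect consecutive pieces through cylinders $\partial K_i\times[0,T_i]$, using the interpolation of Lemma \ref{lemmajoiningpieces}. Each $T_i$ can be taken as large as we like, and long cylinders only help the scalar curvature estimate. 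Since the cylinders have bounded geometry, the whole metric has bounded geometry, and in the Morse and product-end cases it has uniformly positive scalar curvature.

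The growth type is set by the lengths and cross sections of the necks. With finitely many ends, the end is eventually a union of cylinders over boundary metrics. We first arrange, by also inserting connected sums with round spheres or by choosing the scaling constants, that the cross-sectional volume of the $i$th neck can be prescribed up to bounded factors. We then pick $T_i$ and the cross-sectional volumes so that the volume of the metric ball of radius $n$ is comparable to $v(n)$; this follows the Block--Weinberger and Greenfield--Prassidis style arguments in \cite{BM,GP}. The hypothesis that $v$ has bounded growth of derivative makes the increments $v(n+1)-v(n)$ comparable at consecutive scales and bounded below by $1/L$. A cylinder of length about $1$ and cross-sectional volume comparable to $v(n+1)-v(n)$ therefore realizes the increment, and bounded geometry permits this as long as the increment is bounded below.

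This is the main obstacle: the diffeomorphism type of the cross sections is fixed by the decomposition, so changing cross-sectional volume while keeping bounded geometry and positive scalar curvature requires rescaling. Rescaling a product metric $\lambda(h+dt^2)$ scales scalar curvature by $\lambda^{-1}$, so growing volume forces $\lambda\to\infty$ and destroys any uniform lower bound. I would try first to avoid rescaling by taking fibrewise connected sums of the cross section with many copies of small round spheres of fixed size. By the Gromov--Lawson surgery construction \cite{GL} these carry positive scalar curvature with bounded geometry and keep the scalar curvature bounded below. Inserting them through Walsh-type cobordisms, Theorem \ref{theoremwalsh}, does not change the diffeomorphism type of $M$ if the spheres are attached and later cancelled as handle pairs. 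This is why the uniformly positive conclusion is asserted only for Theorems \ref{theoremmorsepsc} and \ref{theoremproductupsc}. In the setting of Theorem \ref{theorempscexhaustion} we would settle for positive scalar curvature and use conformal rescalings.

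For infinitely many ends the argument splits in two. Necessity is the standard fact that a bounded geometry manifold with infinitely many ends has superlinear volume growth: a ball of radius $n$ meets a growing number of disjoint unbounded rays, each contributing volume at least comparable to $n$ by bounded geometry. Sufficiency repeats the construction above, distributing the volume increments over the finitely many ends met in each annulus.
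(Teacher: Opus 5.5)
The paper does not construct the growth control itself. It only notes that all three theorems give a positive scalar curvature metric with cylindrical necks, and then invokes Theorem~1 of \cite{GP} in the positive scalar curvature form of \cite{Ours}; for the uniform bound it adds isometric necks and finitely many piece types.

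Reconstructing that step by hand, as you do, exposes a genuine gap on the slow-growth side. Bubbles and long cylinders can only add volume. Nothing in your construction makes a neck cross-section $\partial K_i$ smaller than its topology allows. A cross-section of a bounded-geometry product neck is itself a closed manifold of bounded geometry, so its total Betti number is at most a constant times its volume (take the nerve of a cover by convex $\epsilon$-balls). Your sentence that the end is eventually a union of cylinders holds only in the product-end case. For example, take $M=\mathrm{int}\bigl(\natural_{j\ge1}S^1\times D^{n-1}\bigr)$ with $n\ge4$, built from one $0$-handle and infinitely many $1$-handles. It satisfies Theorem~\ref{theoremmorsepsc}, since $1\le n-3$, and it has one end. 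But its level sets are $\#_k(S^1\times S^{n-2})$, so the neck volumes are forced to infinity and no choice of $T_i$ realizes $v(n)=n$.

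This is not an artifact of your method. Let $c_j$ be the cores of the $1$-handles and $D_j$ their cocores, extended to properly embedded hypersurfaces with $D_j\cdot c_i=\delta_{ij}$. Suppose a compact $U$ with boundary $\Sigma$ contains $c_1,\dots,c_k$, and $S\supset\Sigma$ misses them. Then the cycles $\partial(D_j\cap U)$ are independent in $H_{n-2}(S;\mathbb{Q})$. Indeed, a relation $\sum_j a_j\,\partial(D_j\cap U)=\partial\tau$ with $\tau\subset S$ would give a compact $(n-1)$-cycle $\sum_j a_j(D_j\cap U)-\tau$ with intersection number $a_i$ against $c_i$. This is impossible, because $H_{n-1}(M;\mathbb{Q})=0$.

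Now take $\Sigma$ a smoothed distance sphere, and $S$ the union of convex $\epsilon$-balls centred on an $\epsilon$-net of $\Sigma$. The nerve bound then shows that, for every bounded-geometry metric, the annulus $\{r-2\le d(p,\cdot)\le r+1\}$ has volume at least $c\,k(r)$. Here $k(r)\to\infty$ counts the cores in $B(p,r-2)$, so $\mathrm{vol}\,B(p,r)/r\to\infty$. Thus part (1) as stated fails for this $M$ and $v(n)=n$. It needs an extra hypothesis, for instance an exhaustion whose boundaries have uniformly bounded topology. The paper's proof does not meet this obstruction only because it imports the growth control wholesale from \cite{GP,Ours}.

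Separately, your bubble mechanism has two further problems:
\begin{enumerate}
\item It needs cross-sections of dimension at least $3$: the Gromov--Lawson connected sum requires this, and Theorem~\ref{theoremwalsh} requires index $1\le n-3$. So it is unavailable for $n=3$. There, a product neck over a connected surface with $R\ge\epsilon$ even has area at most $8\pi/\epsilon$ by Gauss--Bonnet, so large cross-sections are impossible.
\item When the increments of $v$ drop, you must remove bubbles. That is not a Walsh cobordism, since the dual handles have index $n-1$ and $n$.
\end{enumerate}
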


\begin{proof}
    In all three cases, by our constructions, we get that the manifold admits a positive scalar curvature metric with cylindrical necks. In \cite{GP}, Grimaldi and Pansu constructed a metric with volume growth of the same growth type as $v$ by decomposing the manifold into pieces with product boundary, and controlling the length of the product necks. In \cite{Ours} it is shown that such a metric can be constructed while maintaining the positive scalar curvature condition. Hence, the crucial ingredient needed to apply Theorem $1$ of \cite{GP} is the existence of an exhaustion $\{U_i\}$ of $M$ such that each $U_{i+1}\setminus U_i$ admits a positive scalar curvature metric which is product metric near the boundary, such that the corresponding boundary metrics are isometric. This is of course satisfied by the manifolds we consider by virtue of having product necks, and hence the result holds.

    If $M$ satisfies the constraints in Theorem \ref{theoremmorsepsc} or Theorem \ref{theoremproductupsc}, the product necks can be chosen such that they are isometric to each other. Additionally, the compact pieces $U_{i+1}\setminus U_i$ would be of finitely many diffeomorphism types. This is because in the case of manifolds with product ends, the pieces $U_{i+1}\setminus U_i$ are eventually diffeomorphic to the same compact product manifold, whereas in the case of manifolds satisfying Theorem \ref{theoremmorsepsc} the pieces $U_{i+1}\setminus U_i$ can be one of $n-2$ different types depending on the index of the critical point (that is, the attached handle) between two product necks. Hence, the metric can be chosen such that the sectional curvature, injectivity radius, and scalar curvature are all bounded, which implies that there exists a metric with bounded geometry and uniformly positive scalar curvature.
\end{proof}

Note that we cannot guarantee the existence of a UPSC metric with the desired growth type for manifolds having minimal boundary exhaustions as per Theorem \ref{theorempscexhaustion} since we have no control over the diffeomorphism types of the pieces in between the minimal hypersurfaces. Hence, it is generally not possible to choose positive scalar curvature metrics on all the pieces while keeping the sectional curvature bounded above, injectivity radius bounded below, and the scalar curvature bounded below, since decreasing the sectional curvature or increasing the injectivity radius by scaling would also decrease the scalar curvature.


\begin{thebibliography}{HD}
\baselineskip=17pt


\bibitem{BM} Badura, \emph{Prescribing growth type of complete Riemannian manifolds of bounded geometry}, Ann. Polon. Math. {\bf75} (2000),  no. 2, 167-175.


\bibitem{Balacheff}    Balacheff, Gil Moreno de Mora Sard\`a and
              Sabourau,
    \emph{Complete 3-manifolds of positive scalar curvature with
              quadratic decay}, Math. Ann. {\bf392} (2025),
    no. 3, 4361-4389.

\bibitem{Taming3mfds} Chang, Weinberger and Yu,
     \emph{Taming 3-manifolds using scalar curvature},
   Geom. Dedicata {\bf148} (2010),
    3-14.

\bibitem{Chen} Chen, \emph{Optimal decay constant for complete manifolds of positive scalar curvature with quadratic decay: S. Chen}, Math. Ann. {\bf395} (2026), no. 1.


\bibitem{ChodoshLiChao}  
    Chodosh and Li,
     \emph{Generalized soap bubbles and the topology of manifolds with positive scalar curvature},
   Ann. of Math. (2) {\bf199}
      (2024),
    no. 2,
     707-740.


\bibitem{CMM} Chodosh,  Maximo, and Mukherjee, \emph{Complete Riemannian 4-manifolds with uniformly positive scalar curvature}, (2024), URL https://arxiv.org/pdf/2407.05574.
      

\bibitem{Ours} Das and Maity, \emph{Volume growth functions of complete Riemannian manifolds with positive scalar curvature}, arXiv preprint arXiv:2410.04121 (2024).

\bibitem{GP} Grimaldi and  Pansu, \emph{Bounded geometry, growth and topology}, J. Math. Pures Appl. {\bf 95} (2011), 85-98.


\bibitem{Gromovmetricinequalities} Gromov,
    \emph{Metric inequalities with scalar curvature}, Geom. Funct. Anal. {\bf28}
      (2018),
    no. 3,
     645-726.

\bibitem{Gromov5aspherical} Gromov, \emph{No metrics with positive scalar curvatures on aspherical 5-manifolds}, arXiv preprint arXiv:2009.05332 (2020).


\bibitem{FourLectures}  
   Gromov,
     \emph{Four lectures on scalar curvature},
 {Perspectives in scalar curvature. {V}ol. 1} (2023), 1-514.



\bibitem{GL} Gromov and  Lawson, \emph{The Classification of Simply Connected Manifolds of Positive Scalar Curvature}, Ann. Math. {\bf 111} (1980). 

\bibitem{GromovBlaine} Gromov and Lawson, Jr., 
     \emph{Positive scalar curvature and the {D}irac operator on complete
              {R}iemannian manifolds},
 Inst. Hautes \'Etudes Sci. Publ. Math.
 {\bf58}
      (1983),
    83-196.
      


\bibitem{Orikasa} Orikasa, \emph{Linking at Infinity and Scalar Curvature Decay on Non-Compact Manifolds}, arXiv preprint arXiv:2604.06547 (2026).

\bibitem{Rade} R{\"a}de, \emph{Scalar and mean curvature comparison via {\$}{$\backslash$}mu{\$}-bubbles}, Calc. Var. Partial Differential Equations
	{\bf 62}
	(2023), no. 7.

\bibitem{ConformalXu} Rosenberg, Ruberman,  and Xu, \emph{The Conformal Laplacian and Positive Scalar Curvature Metrics on Manifolds with Boundary}, arXiv preprint arXiv:2302.05521
  (2023).


\bibitem{SY} Schoen and  Yau, \emph{On the structure of manifolds with positive scalar curvature}, Manuscripta Math. {\bf 28} (1979), no. 1-3, 159-183.


\bibitem{SYS}   Shi, Wang, Wu, and Zhu,
     \emph{On open manifolds admitting no complete metric with positive scalar curvature}, Ann. Inst. Fourier (Grenoble)
     (2026).

\bibitem{Song} Song, 
     \emph{A dichotomy for minimal hypersurfaces in manifolds thick at
              infinity},
   Ann. Sci. \'Ec. Norm. Sup\'er. (4)
 {\bf56}
      (2023),
    no. 4, 1085-1134.

\bibitem{Sweeney} Sweeney Jr., \emph{Positive curvature conditions on contractible manifolds}, 
  Math. Ann. {\bf394} (2026),
  no. 3.

\bibitem{JW} Wang, \emph{Topology of $3$-manifolds with uniformly positive scalar curvature}, arXiv preprint arXiv:2212.14383 (2023).

\bibitem{WangWhitehead}  Wang,
     \emph{Contractible 3-manifolds and positive scalar curvature ({I})},
   J. Differential Geom. {\textbf127} (2024),
    no. 3, 1267-1304.

\bibitem{MWalsh} Walsh, \emph{Metrics of positive scalar curvature and generalised {M}orse functions, {P}art {I}}, Mem. Amer. Math. Soc. {\bf 209} (2011), no. 983, xviii+80.
     
\bibitem{WXZ} Wei, Xu and Zhang, \emph{Volume growth and positive scalar curvature}, Trans. Amer. Math. Soc. {\bf 378}(2025), no. 9, 6109-6136.
     

\bibitem{muExistence} Zhu,
     \emph{Width estimate and doubly warped product}, Trans. Amer. Math. Soc. {\bf374} (2021),
    no. 2,
     1497-1511.


\end{thebibliography}
\end{document}